\def\dessin#1#2{\includegraphics[#1]{#2}}
\DeclareDocumentCommand \addpic{O{0.4\textwidth} m g}{\parpic[r]{%
\begin{minipage}{#1}
    \includegraphics[width=\textwidth]{#2}%
    \IfNoValueTF{#3}{}{\captionof{figure}{\footnotesize #3}}
\end{minipage}
}}
\newtheorem{theo}{Theorem}
\newtheorem{lem} [theo]{Lemma}
\newtheorem{cor}[theo]{Corollary}
\newtheorem{rem}[theo]{Remark}
\def\orr{\overline{r}}
\def\oR{\overline{R}}
\newcommand{\area}{\operatorname{area}}
\begin{document}

.
\vskip -.5in

\title{Inverting the Rational Sweep Map}

\author{Adriano M. Garsia$^1$ and Guoce Xin$^2$}

\address{ $^1$Department of Mathematics, UCSD \\
$^2$School of Mathematical Sciences, Capital Normal University,
Beijing 100048, PR China}

\email{$^1$\texttt{garsiaadriano@gmail.com}\ \  \&\small $^2$\texttt{guoce.xin@gmail.com}}

\date{March 20, 2016} 

\begin{abstract}
We present a simple algorithm for inverting the sweep map on rational $(m,n)$-Dyck paths for a  co-prime pair $(m,n)$ of positive integers. This work is inspired by Thomas-Williams work on the modular sweep map.
A simple proof of the validity of our algorithm is included.
\end{abstract}

\maketitle

\section{The Algorithm}
Inspired by the Thomas-William algorithm \cite{Nathan}  for inverting the general modular sweep map,
we find a simple algorithm to invert the sweep map for rational Dyck paths.
The fundamental fact that made it so difficult to invert the sweep map in this case is that all  previous attempts used only   the ranks
of the vertices of the rational Dyck paths. Moreover the geometry of rational Dyck paths   was not consistent with those  ranks.

A single picture will be sufficient here to understand the idea.
In what follows, we always denote by $(m,n)$ a co-prime pair of positive integers, South end  (by letter $S$) for the starting point of a North step and West end (by letter $W$) for the starting point of an East step, unless specified otherwise. This is convenient and causes no confusion because we usually talk about the starting points of these steps.

\begin{figure}[!ht]
\centering{
\mbox{\dessin{width=3.5 in}{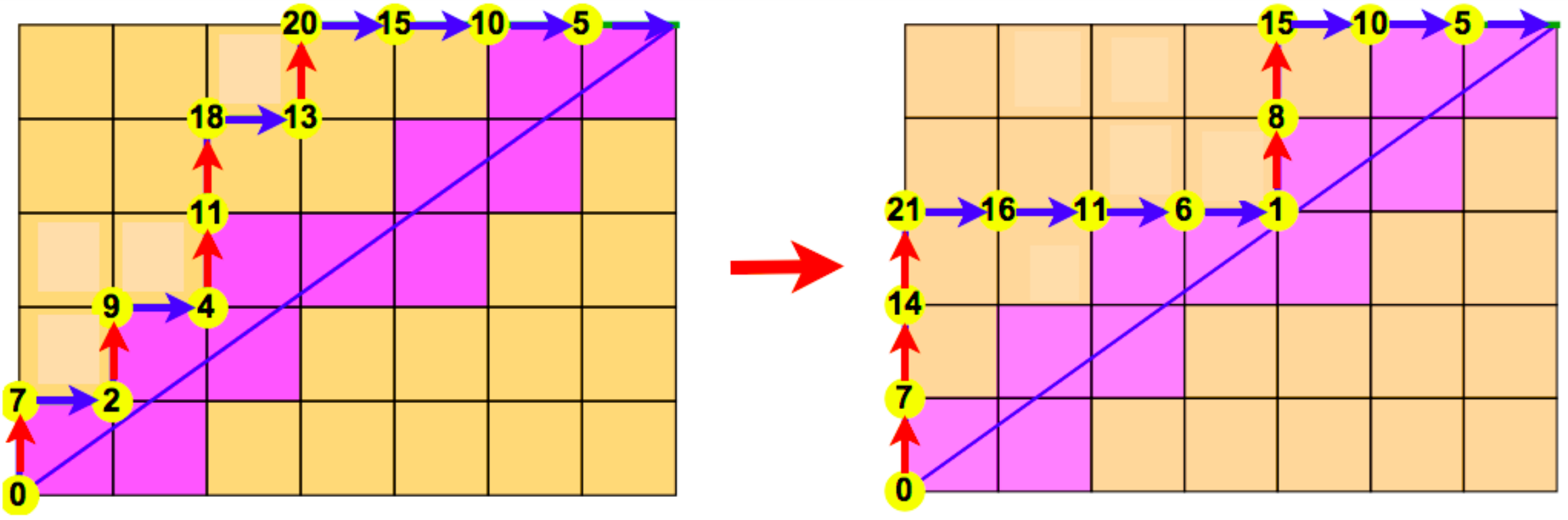}}
$$\bar D\hspace{2cm} \rightarrow \hspace{2cm} D$$
\caption{A rational $(7,5)$-Dyck path and its sweep map image.}
\label{fig:SweepMap}
}
\end{figure}

Figure \ref{fig:SweepMap} illustrates a rational $(m,n)$-Dyck path $\overline D$ for $(m,n)=(7,5)$
and its sweep map image $D$ on its right.
Recall that the ranks of the starting vertices of an $(m,n)$-Dyck path $\overline D$ are recursively  computed   starting with rank $0$, and adding $m$ after a North step and subtracting $n$ after an East step as shown in Figure \ref{fig:SweepMap}.

To obtain the Sweep image $D$ of $\overline D$, we let the main diagonal (with slope $n/m$) sweep from right to left and successively draw the steps of $D$ as follows: i) draw a South end (and hence a  North step) when we sweep a South end of $\overline D $; ii) draw a West end (hence an  East step) when we sweep a West end of $\overline D$. The steps of $D$ can also be obtained by rearranging the steps of  $\overline D $ by increasing ranks of their starting  vertices. The  sweep map has become an active subject in the recent 15 years. Variations and extensions have been found, and some classical bijections turn out to be
the disguised version of the sweep map. See \cite{sweepmap} for detailed information and references.

The open problem was the reconstruction of the path on the left from the path on the right.
The idea that leads to the solution of this problem is to draw these two paths as  in Figure \ref{fig:SweepMapNew}.

\begin{figure}[!ht]
\begin{center}
 \mbox{\dessin{width=4 in}{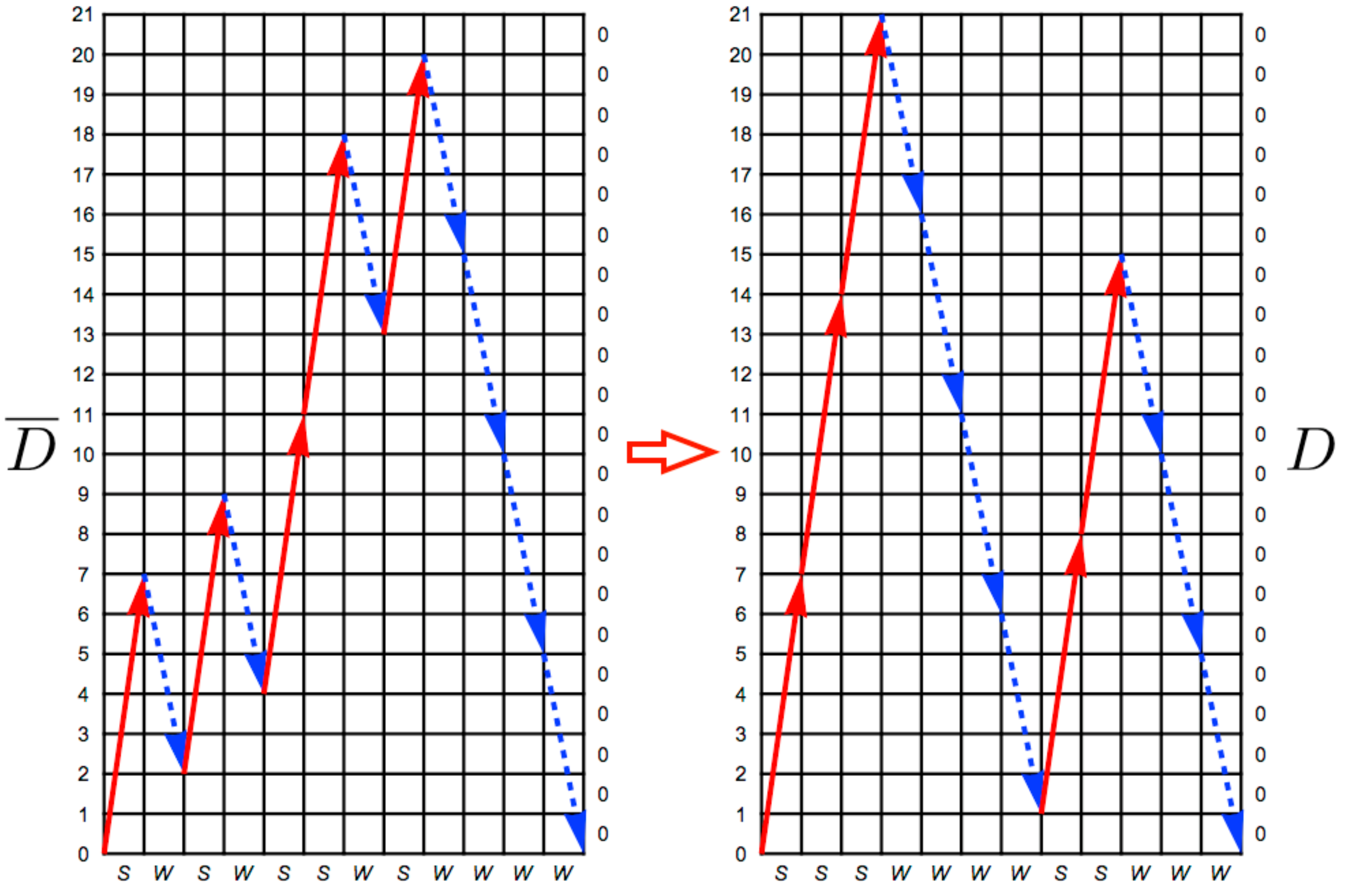}}
\caption{Transformation of the $(7,5)$-Dyck paths in Figure \ref{fig:SweepMap}.}
\label{fig:SweepMapNew}
\end{center}
\end{figure}

That is we first  stretch all the arrows so that their lengths correspond to the effect they have on the ranks of the vertices of the path then add an appropriate
clockwise rotation  to obtain the two path diagrams in Figure \ref{fig:SweepMapNew}.
The path diagrams are  completed by writing an $S$ for each South end in our original path and a $W$ for each West end.
On the left we have added a list of each level. The ranks of $\overline D$ become visually the levels of the staring points of the arrows.
On the right, at each level we count the red (solid) segments and the blue (dashed)\footnote{Suggested by the referee, we have drawn blue dashed arrows for convenience of black-white print. We will only use ``red" and ``blue" in our transformed Dyck paths, but in our context, red, solid, up and positive slop are equivalent; blue, dashed, down and negative slop are equivalent.} segments which traverse that level and record their difference. Of course these differences (called row counts) turn out to be all equal to $0$, for obvious reasons. This will be referred to as the $0$-row-count property. Theorem \ref{t-characterization} states that this is a characteristic property of rational Dyck paths, which becomes evident when paths are drawn in this manner. This fact is conducive to the discovery of  our algorithm for  constructing the pre-image of any $(m,n)$-Dyck path.

\begin{figure}[!ht]
\centering{
\mbox{\dessin{width=3.8 in}{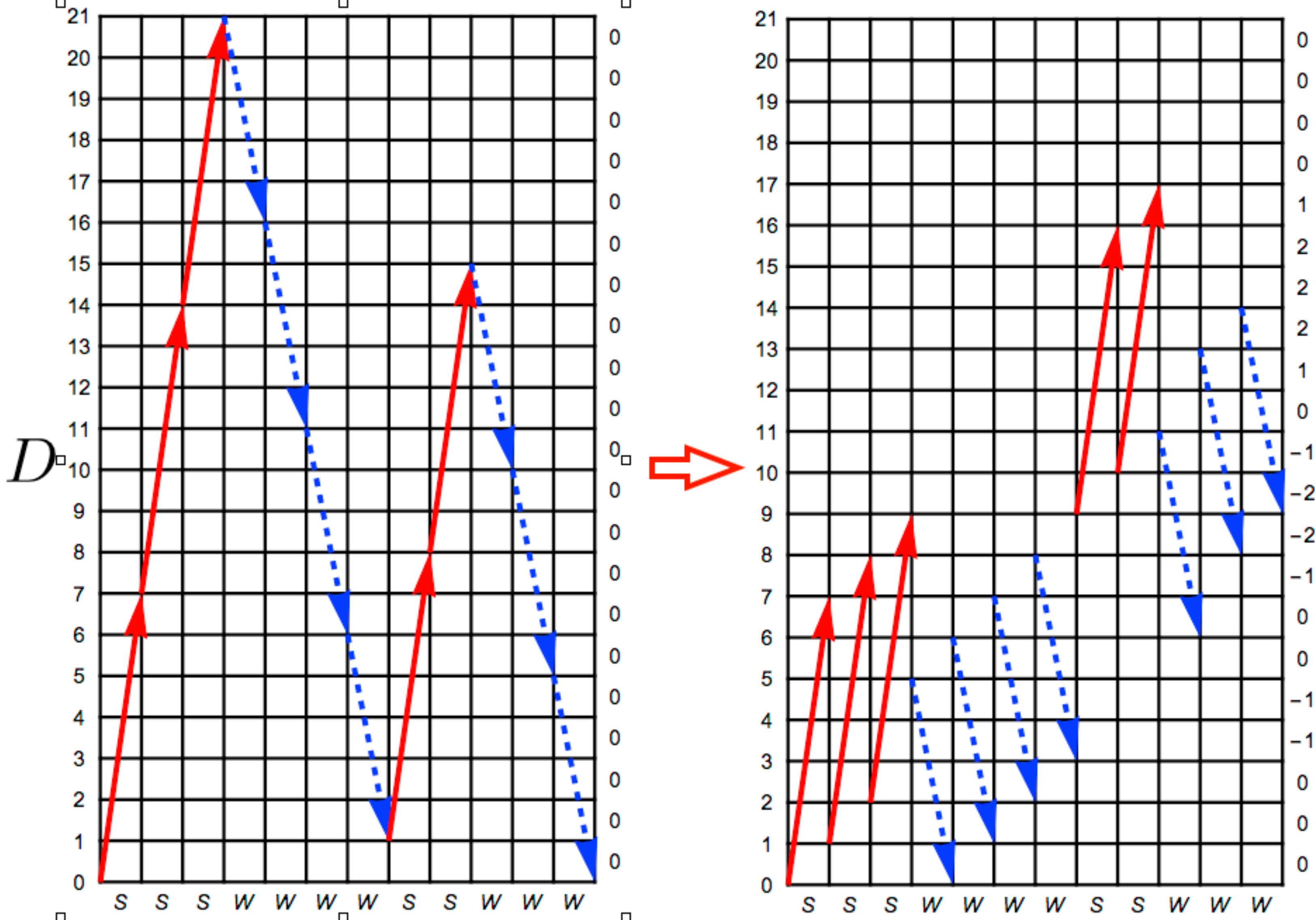}}
\caption{A given rational Dyck path and its starting path diagram on the right.}
\label{fig:InitialRank}
}
\end{figure}

The first step in our algorithm is to reorder  the
arrows of the path on the left of Figure  3, 
so that the ranks of their starting points are
minimally strictly increasing. More precisely the first three red arrows are
lowered in their columns to start at levels $0,1,2$.  To avoid placing part of the first blue arrow below level $0$ we lower it to start at level $5$. This done all the remaining arrows are successively placed to start at levels $6,7,8,9,10,11,12,13$. Notice the row counts at the right of the resulting path diagram. Our aim is to progressively reduce them  all to zeros, which are the row counts
characterization of the path diagram we are working to reconstruct.


\begin{figure}[!ht]
\centering{
\mbox{\dessin{height=5 in}{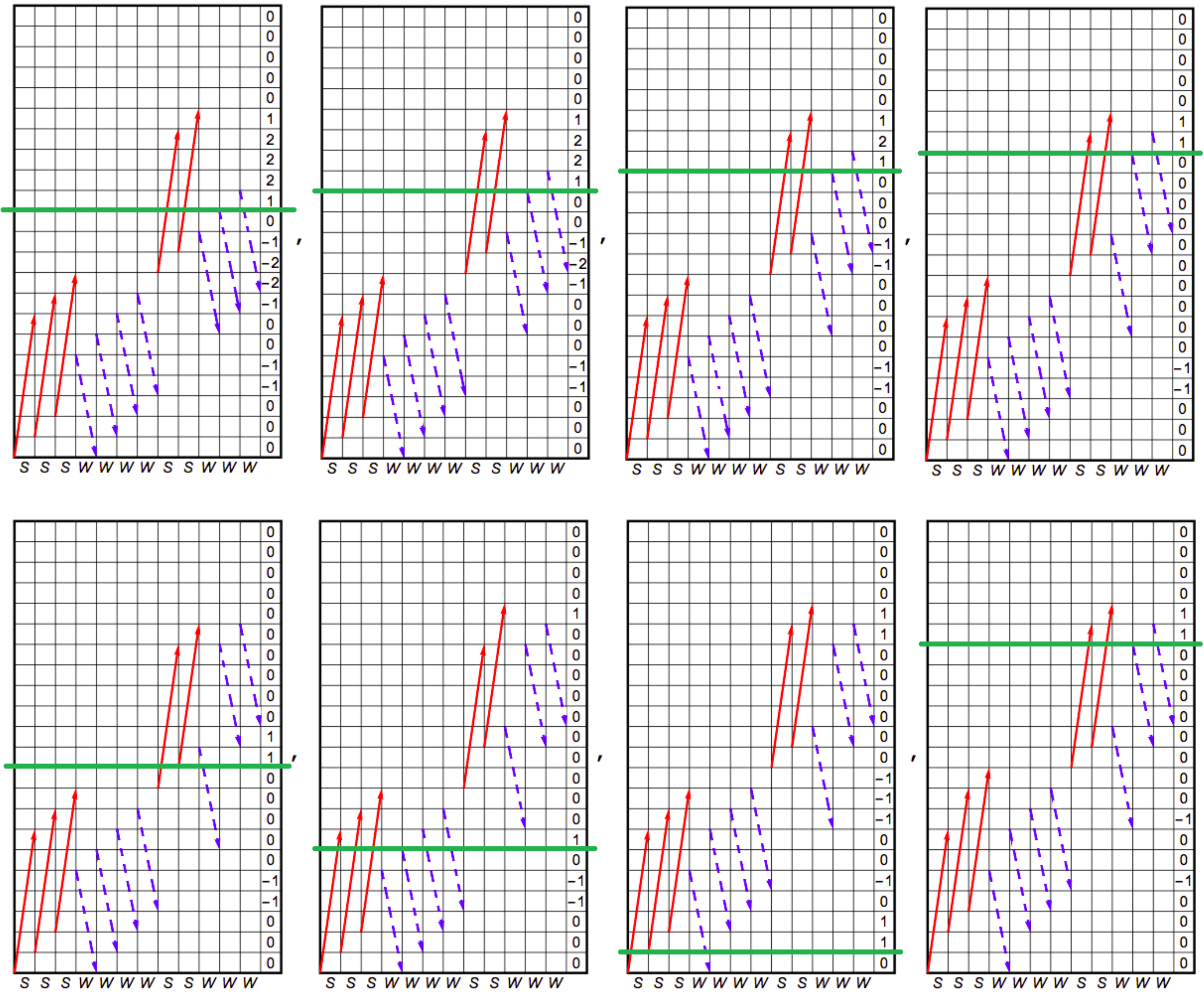}}\\[2ex]
\mbox{\includegraphics[height=2.43 in]{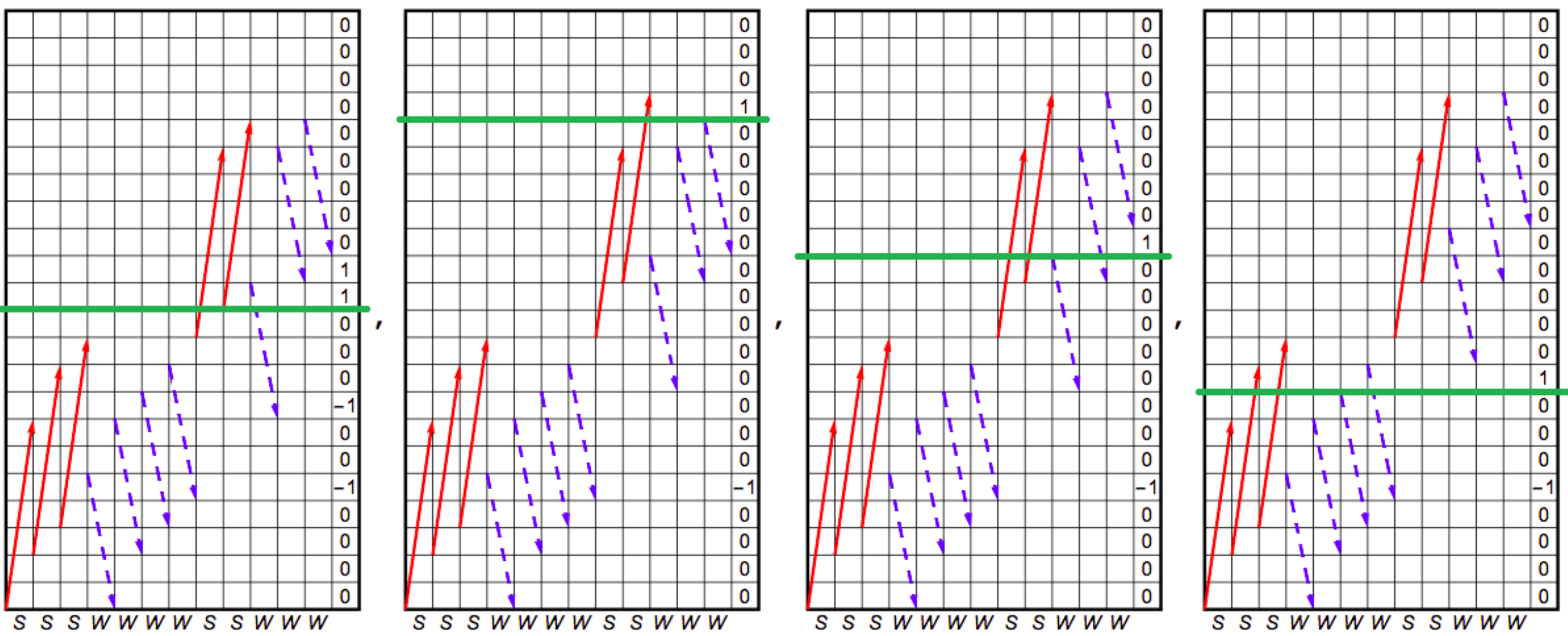}}
\caption{Part 1 of the 18 path diagrams that our algorithm produced.}
\label{fig:AlgorithmResult1}
}
\end{figure}
\begin{figure}[!ht]
\centering{
\mbox{\includegraphics[height=5 in]{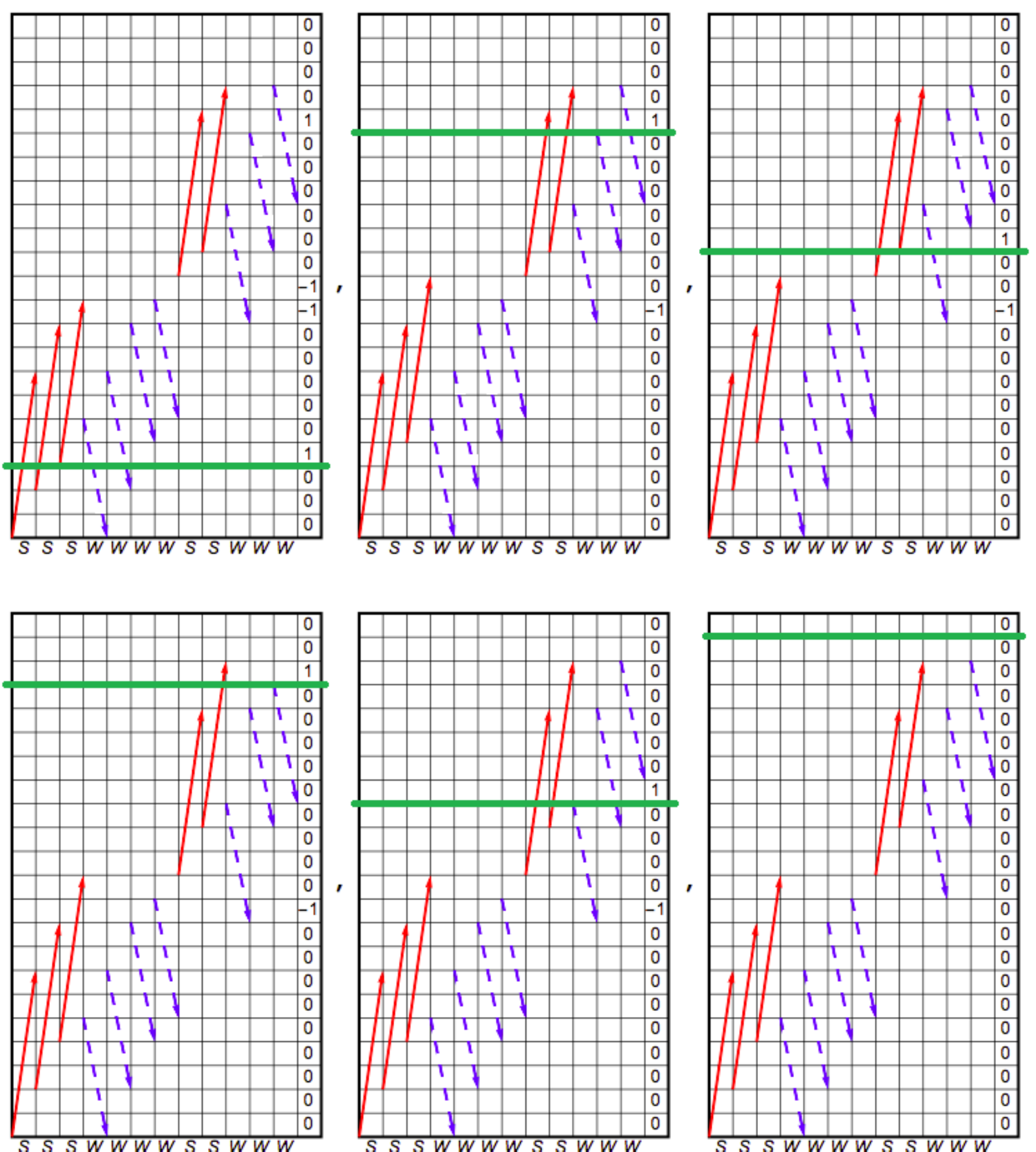}}
\caption{Part 2 of the 18 path diagrams that our algorithm produced.}
\label{fig:AlgorithmResult2}
}
\end{figure}

The miracle is that this can be achieved  by a sequence of identical steps.
 More precisely, at each step of our algorithm we locate the lowest row
 sum that is greater than $0$. We  next notice that there is a unique arrow that  starts immediately below that   row
 sum.  This done we move that arrow
 one unit upwards. However,  to keep the ranks strictly increasing we also shift, when necessary,  some  of the successive arrows by one unit upwards.
In this particular case our MATHEMATICA implementation of the resulting algorithm produced the sequence of 18 path diagrams in Figures \ref{fig:AlgorithmResult1} and
\ref{fig:AlgorithmResult2}. Notice, the green(thick) line has been added in each path diagram to make evident the height of the lowest positive row count. Of course  each step ends with an updating of the row counts.


%
The final path diagram yields a path that is easily shown to be the desired pre-image. To obtain this path, we simply start with the leftmost red arrow, and at each step we proceed along the  arrow that starts at the rank reached by the previous arrow. Continue until all the arrows have been used.
 The reason why there always is a unique arrow that starts at each reached rank, is an immediate consequence of the $0$-row-count property
of the final path diagram.   The increasing property of the ranks of
  the starting  points of our   successive arrows, is now seen
to imply that the
initial path in Figure \ref{fig:InitialRank} is the sweep map image of this final path.
This manner of drawing  rational Dyck paths makes many needed properties more evident than the traditional manner and therefore also
easier to prove. As a case in point, we give a simple visual way of establishing the following nontrivial result (see, e.g., \cite{sweepmap}).

\begin{lem}\label{l-sweep-image-Dyck}
The sweep image of an
$(m,n)$-Dyck path is an $(m,n)$-Dyck path.
\end{lem}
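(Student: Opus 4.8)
The plan is to verify separately the two defining features of an $(m,n)$-Dyck path for the sweep image $D$ of $\overline{D}$: that $D$ is a lattice path from $(0,0)$ to $(m,n)$ with $n$ North steps and $m$ East steps, and that every vertex of $D$ has non-negative rank (i.e.\ $D$ stays weakly above the main diagonal). The first is immediate, since $D$ is by construction a rearrangement of the $m+n$ steps of $\overline{D}$, so it has the same step content and the same endpoints; the rearrangement is moreover unambiguous because, with $\gcd(m,n)=1$, the $m+n$ starting vertices of $\overline{D}$ carry pairwise distinct ranks (equal ranks for starting vertices $(x,y)\ne(x',y')$ would force $m\mid(x-x')$, hence $\{(x,y),(x',y')\}=\{(0,0),(m,n)\}$ along a monotone lattice path, but $(m,n)$ is not a starting vertex).

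For the Dyck condition I would work directly on the path diagram of $\overline{D}$ and exploit the $0$-row-count property. Write the vertices of $\overline{D}$ along the path as $v_0,\dots,v_{m+n}$ with ranks $r_0=0,r_1,\dots,r_{m+n}=0$, and let $0=\rho_1<\dots<\rho_{m+n}$ be the starting ranks in increasing order. By the definition of the sweep map, the first $j$ steps of $D$ are exactly the steps of $\overline{D}$ with starting rank $\le\rho_j$, so for any non-integer threshold $t$ with $\rho_j<t<\rho_{j+1}$ the rank of the $j$-th vertex of $D$ equals $\sum_{i:\,r_{i-1}<t}(r_i-r_{i-1})$. Telescoping this sum over the maximal sub-paths of $\overline{D}$ lying below level $t$ collapses it to $\sum_k(r_{u_k}-r_{d_k})$, where $u_1<d_1<u_2<d_2<\cdots$ index the successive up-crossing (North) and down-crossing (East) steps of the level $t$; the $0$-row-count property of $\overline{D}$'s diagram is precisely the statement that these up- and down-crossings balance, which legitimizes the interleaved pairing. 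Since $r_{u_k}=r_{u_k-1}+m\ge\lceil t\rceil$ and $r_{d_k}=r_{d_k-1}-n\le\lfloor t\rfloor$, each summand is $\ge 1$; and for $0<j<m+n$ one has $0<t<\rho_{m+n}=\max_i r_i$, so $\overline{D}$ genuinely crosses level $t$ and there is at least one summand. Hence the $j$-th vertex of $D$ has rank $\ge 1>0$ for $0<j<m+n$, and rank $0$ for $j=0$ and $j=m+n$, which is exactly the Dyck property.

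The only step I expect to require care is the combinatorial bookkeeping behind the telescoping: checking that the steps with starting rank below $t$ form exactly the complement of the above-$t$ excursions of $\overline{D}$, that the crossing indices $u_k$ and $d_k$ genuinely interleave (this is the point where the $0$-row-count property is invoked), and that the threshold $t$ can always be chosen with $\rho_j<t<\rho_{j+1}$ and $t>0$ for every $j$ in range. Once that is pinned down, the inequality $r_{u_k}-r_{d_k}\ge 1$ is a one-line estimate and the conclusion can be read straight off the picture, exactly as the figures suggest.
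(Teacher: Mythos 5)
Your proof is correct, and while it rests on the same underlying fact as the paper's, the bookkeeping is genuinely different. The paper works entirely in the transformed path-diagram picture: it rearranges the arrows of $\overline{D}$ so their starting ranks increase and shows every partial signed sum of arrow lengths is non-negative by a region count --- with $v$ the vertical line at the relevant column, $h$ the horizontal line at the starting level of the arrow on $v$, and $L_1,L_2$ the parts of the region below $h$ to the left and right of $v$, the $0$-row-count property gives $|rL_1|+|rL_2|=|bL_1|+|bL_2|$, while the increasing starting ranks force $|rL_2|=0$, whence $|rL_1|-|bL_1|=|bL_2|\ge 0$. You instead stay on the original path $\overline{D}$ and compute the rank of the $j$-th vertex of $D$ as the telescoping sum $\sum_k\bigl(r_{u_k}-r_{d_k}\bigr)$ over the alternating up- and down-crossings of a generic non-integer level $t\in(\rho_j,\rho_{j+1})$, each pair contributing at least $1$. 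The balancing of up- and down-crossings that you attribute to the $0$-row-count property is indeed that property, though it is automatic for any walk that starts and ends below level $t$, so your argument is in fact self-contained and does not need the path-diagram formalism at all; it also yields the slightly sharper conclusion that every interior vertex of $D$ has rank at least the number of times $\overline{D}$ rises through the corresponding level, hence rank $\ge 1$. What the paper's version buys is that it is read directly off the transformed diagrams and the $0$-row-count characterization (Theorem \ref{t-characterization}) that drive the rest of the paper. Your opening paragraph on the distinctness of the starting ranks is also correct and worth keeping, since it is what makes the sweep rearrangement well defined.
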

\begin{proof}
\begin{figure}[!ht]
\centering{
\mbox{\includegraphics[height=2.8 in]{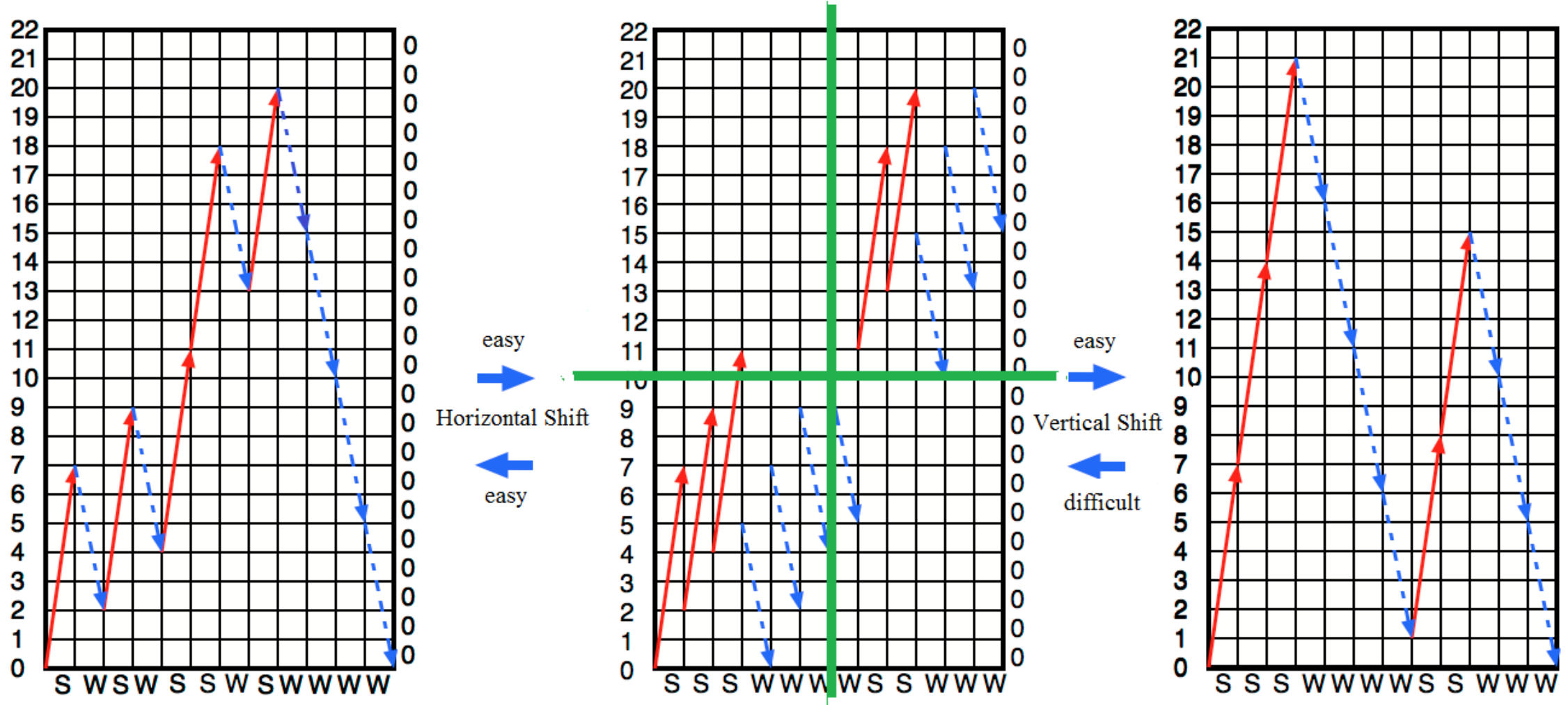}}
\caption{A $(7,5)$-Dyck path on the left; by horizontal shifts of the arrows we obtain the middle path diagram whose starting ranks are increasing; then by vertical shifts of the arrows to obtain a  $(7,5)$-Dyck  path.}
\label{fig:SweepMapDecompose}
}
\end{figure}

On the left of Figure \ref{fig:SweepMapDecompose} we have the final path yielded by our algorithm. To obtain the path diagram in the middle we simply rearrange the arrows (by horizontal shifts)
so that their starting ranks are increasing.  The path on the right is obtained
by vertically shifting the successive arrows so that they concatenate to a path. To  prove that the resulting path is a $(7,5)$-Dyck path, it follows, by the co-primality of the pair $(7,5)$, that   we need only show that the successive partial sums of the number of segments of these arrows are all non-negative.
This  is a consequence of the $0$-row-count property.  In fact, for  example,
 let us prove that the sum of the segments to the left of the vertical  green line $v$ is positive.

To this end, let $A$ be the arrow that starts on $v$ and $\ell$ be its starting rank. Let $h$ be the horizontal green (thick) line at level $\ell$.
Denote by  $ L$   the region  below   $h$, and let $L_1$, $L_2$ be the left and right portions of $L$ split by  $v$. Let us also denote by    $|rL_1|$,  $|rL_2|$,
the red arrow segment counts in the corresponding regions and
by    $|bL_1|$,  $|bL_2|$ the corresponding blue segment counts.
This given, since  red segments  contribute a  $1$ and a blue segment contributes   $-1$ to the final count, it follows that
$$
i)\,\,\,\,  |rL_1|+|rL_2| = |bL_1|+|bL_2|,
\,\,\,\,\,\,\,\,\,\,\,\,\,\,\,\,\,\,\,\,\,\,\,\,
ii)\,\,\,\,  |rL_2|= 0.
$$
In fact, $i)$ is due to the $0$-row-count property and $ii)$ is simply due to the  fact that  all red arrows to the right  of $v$ must start above $h$. Thus we must  have
$$
|rL_1| -  |bL_1| =|bL_2| \ge 0.
$$
This implies that the sum of the arrows to the left of $v$ must be $\ge 0$.
\end{proof}

A proof of the validity of our algorithm may be derived from the Thomas-Williams  result by letting  their modulus tend to infinity. However, our algorithm deserves a more direct and simple proof.

Such a proof will be given in the following pages. This proof will be based on
the validity of a simpler but less efficient algorithm. To distinguish the above algorithm from our later one. We will call them respectively the \texttt{StrongFindRank} and the \texttt{WeakFindRank} algorithms, or ``strong" and ``weak" algorithm for short.

The rest of the paper is organized as follows. Section 2 devotes to the proof of the \texttt{WeakFindRank} algorithms. It also includes all the necessary concepts and
concludes with Theorem \ref{t-important}, which asserts the invertibility of the rational sweep map. Theorem \ref{t-tight} is the main result of Section 3. It allows us to analyze
the complexity of both the ``strong" and the ``weak" algorithms. It is also used in Section 4, where we show the validity of the ``strong" algorithm. Finally, we discuss the difference between the Thomas-Williams algorithm and our algorithm in Section 5. We also talk about some future plans.

\section{The Proof}
\subsection{Balanced path diagrams}

A path diagram $T$ consists of an ordered  set of $n$  red arrows and $m$   blue arrows, placed on a $(m+n)\times N$ lattice rectangle. Where $N$ is a large positive integer to be specified.   See Figure \ref{fig:InitialRankWeak}.
\addpic[.30\textwidth]{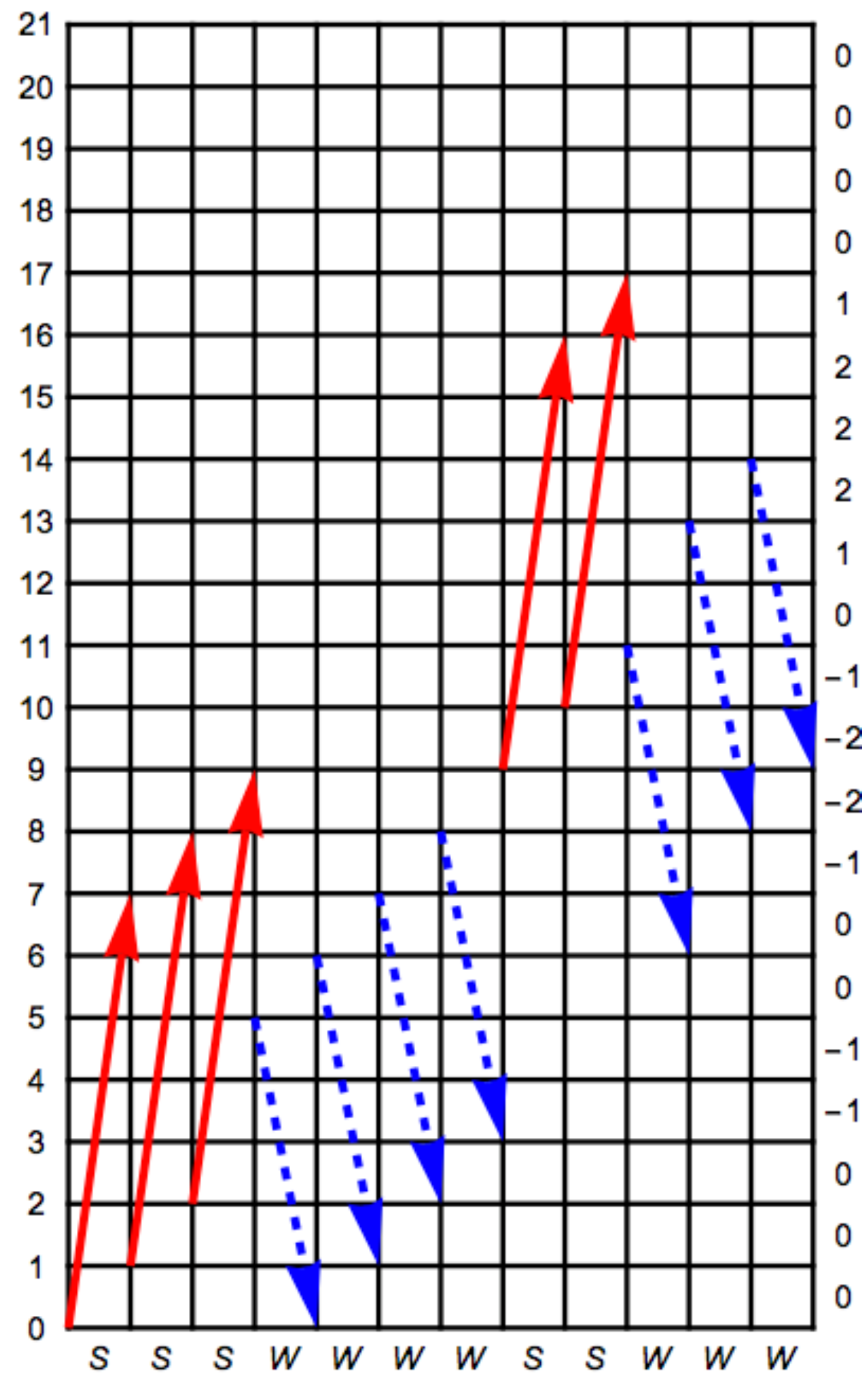}{ A path diagram for $m=5,n=7$ and $N=21$. \label{fig:InitialRankWeak}}

A red arrow is the  up  vector $(1,m)$ and a blue arrow is the down vector $(1,-n)$. The rows of lattice cells will be simply referred to as \emph{rows} and the horizontal lattice lines  will be simply referred to as \emph{lines}. On the left of each line we have placed its $y$ coordinate which we will simply refer to as its \emph{level}. The level of the starting point of an arrow is called its \emph{starting rank}, and similarly its \emph{end rank} is the level of its end point.
It will be convenient to call \emph{row $i$} the row of lattice cells delimited by the lines at levels $i$ and $i+1$.

Given a word $\Sigma$ with $n$ letters $S$ and $m$ letters $W$, and a sequence of $n+m$ nonnegative  ranks $R=(r_1,\dots, r_{m+n})$,
the path diagram $T(\Sigma,R)$ is obtained by placing the letters of $\Sigma$ at the bottom of the lattice columns and drawing in the  $i^{th}$ column an arrow with starting rank $r_i$ and $red$ (solid)
if the $i^{th}$ letter of $\Sigma$ is $\Sigma_i=S$ or $blue$ (dashed)
if $\Sigma_i=W$.
See Figure \ref{fig:InitialRankWeak},
where $\Sigma=SSSWWWWSSWWW$ and
$R=(0,1,2,5,6,7,8,9,10,11,12,13)$. Notice that each lattice
cell may contain a segment of a red arrow or  a segment of a blue arrow or no segment at all. The red segment count of row $j$ will be denoted $c^r(j)$ and the blue segment count
is denoted $c^b(j)$. We will set $c(j)=c^r(j)-c^b(j)$ and refer  to it as the \emph{count of row $j$}. In the above display on the right of each row we have attached its row count.
The following observation will be crucial in our development.
\begin{lem}\label{l-DifferenceLevelCount}
Let $T(\Sigma,R)$ be any path diagram. It holds for every integer $j\ge1$
that
\begin{align}
  c(j)-c(j-1)= \# \{\text{arrows starting at level } j\} -\#\{\text{arrows ending at level } j\}.
\end{align}
\end{lem}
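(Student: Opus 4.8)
The plan is to track how the count $c(j)=c^r(j)-c^b(j)$ changes as we pass from row $j-1$ to row $j$, arrow by arrow. Since $c(j)$ is a sum of contributions from the individual arrows, it suffices to understand the contribution of a single arrow $A$ to $c(j)-c(j-1)$ and then sum over all arrows.

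First I would fix an arrow $A$ and ask: does $A$ deposit a segment in row $j-1$, in row $j$, in both, or in neither? A red arrow is the vector $(1,m)$, so if it starts at rank $a$ it occupies exactly the rows $a, a+1,\dots, a+m-1$ (i.e. it has a segment in row $k$ precisely when $a\le k\le a+m-1$), and it ends at rank $a+m$. Likewise a blue arrow starting at rank $b$ is the vector $(1,-n)$ and occupies rows $b-1, b-2,\dots, b-n$, ending at rank $b-n$. The key elementary observation is that for a \emph{red} arrow, its contribution to $c^r(j)-c^r(j-1)$ is $+1$ if $j$ equals its starting rank, $-1$ if $j$ equals its end rank, and $0$ otherwise (because otherwise it either lies in both rows or in neither); symmetrically, a \emph{blue} arrow contributes $-1$ to $c^b(j)-c^b(j-1)$, hence $+1$ to $c(j)-c(j-1)$, exactly when $j$ is its end rank, and $+1$ to $c^b(j)-c^b(j-1)$, hence $-1$ to $c(j)-c(j-1)$, exactly when $j$ is its starting rank. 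In all cases: an arrow contributes $+1$ to $c(j)-c(j-1)$ when $j$ is its starting rank, $-1$ when $j$ is its end rank, and $0$ otherwise.

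Summing this per-arrow contribution over the full ordered set of $n$ red and $m$ blue arrows immediately gives
\begin{align}
c(j)-c(j-1)=\#\{\text{arrows starting at level }j\}-\#\{\text{arrows ending at level }j\},
\end{align}
which is the claim. A small point to handle cleanly is the boundary behavior near level $0$: for $j=1$ we need $c(0)$ to be well defined, which it is since row $0$ lies inside the lattice rectangle and all ranks are nonnegative, so no arrow has a segment strictly below row $0$; one should also note that the formula is stated only for $j\ge 1$, so no issue arises at the very bottom.

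The only mildly delicate step is the case analysis showing that a red arrow's net contribution to $c^r(j)-c^r(j-1)$ is supported exactly on its starting and ending ranks — in particular verifying that when $j$ is an interior level of the arrow the arrow sits in both row $j-1$ and row $j$ and so cancels, and when $j$ is outside the arrow's span entirely it sits in neither. This is genuinely routine once one is careful about the half-open convention ``row $i$ is delimited by lines $i$ and $i+1$'' and the fact that an arrow of length $m$ (resp. $n$) spans exactly $m$ (resp. $n$) consecutive rows. I do not anticipate any real obstacle; the lemma is essentially a telescoping/bookkeeping identity, and drawing a single arrow crossing a line makes the whole argument transparent.
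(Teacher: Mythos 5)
Your proof is correct and takes essentially the same route as the paper, which also argues arrow by arrow that the contribution to $c(j)-c(j-1)$ is $0$ unless $j$ is the starting or ending rank of the arrow, disposing of the four remaining cases by a picture. One small slip: in your blue-arrow sentence the two subcases are swapped --- when $j$ is the \emph{starting} rank of a blue arrow the arrow sits in row $j-1$ but not row $j$, so it contributes $-1$ to $c^b(j)-c^b(j-1)$ and hence $+1$ to $c(j)-c(j-1)$ (and symmetrically for the end rank); your final summary states the correct rule, so the conclusion is unaffected.
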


\vspace{-4mm}
\addpic[.48\textwidth]{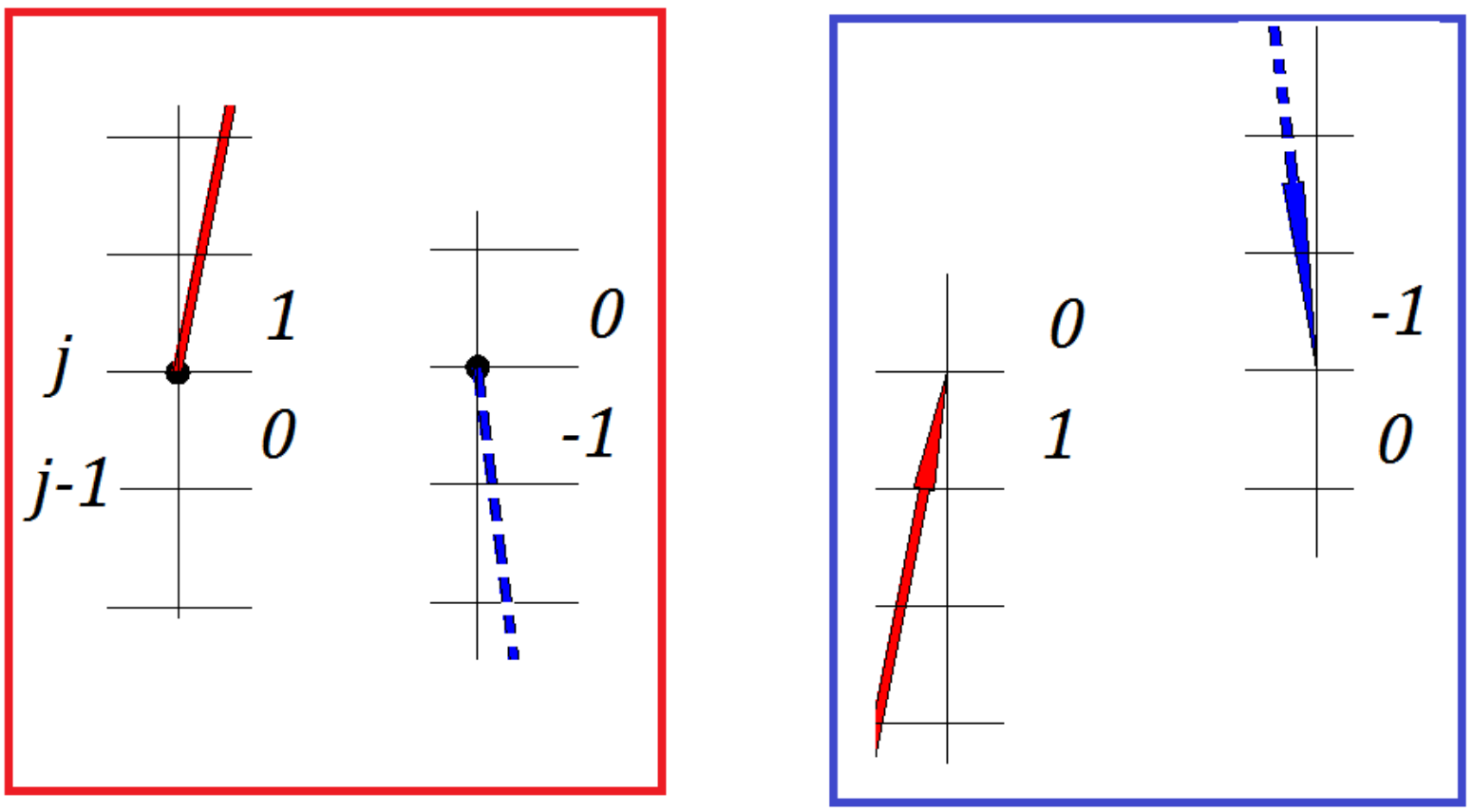}{The difference $c(j)-c(j-1)$ is $1$ in the left two cases,  is $-1$
in the right two cases, and is $0$ in the previous cases. \label{fig:FourCases}}

\begin{proof}
Let us investigate the contribution to the difference $c(j)-c(j-1)$ from a single arrow $A$.
The contribution is $0$ if i) $A$ has no segments in rows $j$ and $j-1$,
ii) $A$ has segments in row $j$ and $j-1$. In both cases,
it is clear that $A$ cannot start nor end at level $j$. Thus the remaining cases are as listed in Figure
\ref{fig:FourCases}.
\end{proof}

It will be convenient to say that a path diagram $T(\Sigma,R)$ is \emph{balanced}  if all
its  row counts are   equal to $0$. The word $\Sigma$ is said to be the \emph{$(S,W)$-word} of
a Dyck path $D$ in $\cal D_{m,n}$, if it is obtained by placing an $S$ when $D$
takes a South end (hence a North step) and a $W$ when $D$
takes a West end (hence an East step).

\begin{theo}\label{t-characterization}
Let $\Sigma$ be the $(S,W)$-word of $D\in \mathcal{ D}_{m,n}$, and
 let $R=(r_1,r_2,\ldots ,r_{m+n})$, with $r_1=0$, be a weakly   increasing sequence of integers.
Then  $R$ is a rearrangement of the  rank sequence of  a pre-image $\overline{D}$ of $D$  under the sweep map, if and only if the path diagram $T(\Sigma,R)$ is balanced and the
rank sequence $R$ is strictly increasing.
\end{theo}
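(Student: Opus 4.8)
The plan is to prove the two implications separately, using Lemma~\ref{l-DifferenceLevelCount} as the main bookkeeping tool to translate between the ``local'' condition (all row counts vanish) and the ``global'' condition (the multiset of ranks arising from a genuine pre-image path).

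First I would set up the correspondence between a Dyck path $\overline D$ with $(S,W)$-word $\Sigma$ and the data $(\Sigma,R)$. Recall that the rank of a vertex of $\overline D$ is computed by starting at $0$ and adding $m$ at each North step, subtracting $n$ at each East step. So if $\overline D$ has $(S,W)$-word $\Sigma$ and we list, for $i=1,\dots,m+n$, the starting rank $\rho_i$ of the $i$-th step, then: after an $S$-step starting at rank $\rho$ the next vertex has rank $\rho+m$; after a $W$-step it has rank $\rho-n$. The sweep map sorts the steps of $\overline D$ by increasing starting rank; by co-primality all these ranks are distinct, so ``$R$ is a rearrangement of the rank sequence of a pre-image $\overline D$'' is equivalent to: there is a bijective assignment of the $n$ red arrows and $m$ blue arrows to columns (consistent with $\Sigma$), with starting ranks the entries of $R$ in column order, such that (a) every red arrow's end rank equals the start rank of the next arrow in rank order, every blue arrow likewise, and the first arrow in rank order starts at $0$; equivalently, reading the arrows in increasing-rank order and concatenating them produces a closed lattice path from $(0,0)$ that is a Dyck path, i.e. never goes strictly below the $x$-axis. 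The point of drawing the diagram $T(\Sigma,R)$ is that ``end rank of each arrow $=$ start rank of the next in rank order'' is exactly the statement that \emph{the multiset of start levels equals the multiset of end levels, except $0$ occurs once more among starts and the top level $M:=nm$ once more among ends} — which by Lemma~\ref{l-DifferenceLevelCount} telescopes to: $c(j)-c(j-1)=0$ for all $j$ with $0<j<M$ not equal to a ``boundary'', and more precisely $c(j)=0$ for all $j$. This is the bridge I will make precise.

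For the forward direction ($R$ a rearrangement of ranks of a pre-image $\Rightarrow$ $T(\Sigma,R)$ balanced and $R$ strictly increasing): Strict monotonicity of $R$ is immediate since, by co-primality, the $m+n$ ranks of vertices of $\overline D$ are pairwise distinct (two vertices with equal rank would force $am=bn$ with $|a|+|b|<m+n$), so the weakly increasing rearrangement is strictly increasing and, as the sweep image is obtained by sorting steps by rank with $r_1=0$, this $R$ matches. For balancedness: because $\overline D$ is a genuine path, the steps concatenate, so each step's end vertex is the next step's start vertex; sorting steps by \emph{starting} rank, and using that a red arrow raises the rank by $m$ while a blue lowers it by $n$, one checks the set of ``arrows starting at level $j$'' and ``arrows ending at level $j$'' have equal cardinality for every $j\ge 1$ (the only rank that is a start-but-not-end is $0$, and the only rank that is an end-but-not-start is the top rank $0$ of the last vertex — but that last vertex has rank $0$ as well since the path is closed, so actually starts and ends match at every positive level). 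Then Lemma~\ref{l-DifferenceLevelCount} gives $c(j)=c(j-1)$ for all $j\ge 1$, and since $c(0)=0$ (no cell lies in row $0$ below everything, or directly: $c$ vanishes for levels below every arrow) we get $c(j)\equiv 0$.

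For the converse ($T(\Sigma,R)$ balanced and $R$ strictly increasing $\Rightarrow$ $R$ is a rearrangement of the ranks of a pre-image of $D$): run Lemma~\ref{l-DifferenceLevelCount} backwards. Balancedness forces, for every $j\ge1$, that the number of arrows starting at level $j$ equals the number ending at level $j$. Combined with $r_1=0$ and strict increase of $R$, this says the multiset of start ranks and the multiset of end ranks of all $m+n$ arrows differ only in that $0$ is an extra start and the maximal end rank is an extra end; moreover at each level the count is $0$ or $1$ on the ``generic'' levels and the matching of starts to ends is forced. Now build $\overline D$ greedily: start with the unique arrow whose start rank is $0$ (it exists and is unique since $R$ is strictly increasing with $r_1=0$); follow it to its end rank $\ell$; the balanced condition guarantees exactly one arrow starts at $\ell$; continue. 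This terminates after using all $m+n$ arrows (no cycles can form before exhausting them, again by the level-count equality and co-primality — a premature cycle would give a nontrivial $\mathbb Z$-relation $am=bn$), producing a closed path. That the arrows were encountered in strictly increasing order of starting rank is exactly what must be checked to conclude this path is a pre-image under the sweep map; this follows because at each level the arrow count is nonnegative (it's $0$ or positive) and the greedy traversal visits levels — here I will invoke the $0$-row-count reasoning already used in the proof of Lemma~\ref{l-sweep-image-Dyck}, namely that partial sums of arrow-segment counts to the left of any vertical line are $\ge 0$, which forces the sorted-by-rank concatenation to stay weakly above the axis, i.e. to be a Dyck path, and forces the reached ranks to be encountered in increasing order. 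Hence the constructed $\overline D$ is an $(m,n)$-Dyck path whose sweep image is $D$, and its rank multiset is $R$.

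The main obstacle I anticipate is the converse direction's final step: verifying that the greedy traversal actually encounters the arrows in increasing order of starting rank (so that the resulting path genuinely sweeps to $D$ rather than to some other rearrangement), and simultaneously that the concatenated path never dips below the axis. Both are ``$0$-row-count'' consequences, but making the argument airtight requires the observation — analogous to the proof of Lemma~\ref{l-sweep-image-Dyck} — that for any level $\ell$ reached during the traversal, all arrows with starting rank $<\ell$ have already been used and all with starting rank $>\ell$ have not, which one extracts from the equality ``(arrows starting at level $j$) $=$ (arrows ending at level $j$)'' at every level together with strict monotonicity of $R$. I would present this as the crux and handle the rest as routine bookkeeping with Lemma~\ref{l-DifferenceLevelCount}.
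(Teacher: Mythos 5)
Your forward direction and your greedy construction of $\overline D$ in the converse (including the co-primality argument ruling out premature cycles) are sound and agree in substance with the paper; your forward direction even takes a slightly cleaner route, deducing balancedness from Lemma~\ref{l-DifferenceLevelCount} plus the multiset equality of start and end ranks, where the paper instead argues that sorting only permutes segments within each row. The problem is the step you single out as the crux of the converse: the claim that ``for any level $\ell$ reached during the traversal, all arrows with starting rank $<\ell$ have already been used,'' i.e.\ that the greedy traversal encounters the arrows in increasing order of starting rank. This is false. If it were true, $\overline D$ would visit the columns of $T(\Sigma,R)$ in rank order, which (since $R$ is strictly increasing) is column order, so $\overline D$ would equal $D$ and the sweep map would be the identity. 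Concretely, for $(m,n)=(3,2)$, $\Sigma=SWSWW$ and $R=(0,2,3,4,6)$, the traversal visits start levels $0,3,6,4,2$: upon reaching level $3$ the arrow with starting rank $2$ is still unused. No $0$-row-count argument can rescue a false statement.

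What actually needs checking is not a property of the traversal order at all. The steps of the constructed $\overline D$ are exactly the $m+n$ columns of $T(\Sigma,R)$; the column-$i$ arrow has color $\Sigma_i$ and starting rank $r_i$, and these starting ranks are precisely the vertex ranks of $\overline D$, since each arrow starts where the previous one ends, beginning at $0$. Because $R$ is strictly increasing, listing the steps of $\overline D$ by increasing starting rank lists the columns in the order $1,2,\ldots,m+n$ and hence reproduces the word $\Sigma$; that is exactly the statement that $D$ is the sweep image of $\overline D$. Likewise $\overline D\in\mathcal{D}_{m,n}$ simply because all its vertex ranks $r_i$ are nonnegative. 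Both points are one-line observations, and this is how the paper concludes; no analogue of the partial-sum argument from Lemma~\ref{l-sweep-image-Dyck} is needed in this theorem. So the gap is repairable, but the repair consists of deleting your proposed key observation and replacing it with the trivial sorting remark.
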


\begin{proof}
Suppose that $\overline{D}$ is a pre-image of $D$. This given, let
$T(\overline{\Sigma},\overline{R})$ with $\overline{\Sigma}$ the $(S,W)$-word of $\overline{D}$,  
$\overline{R}$ the rank sequence of $\overline{D}$,
and  height $N$ chosen to be a number greater than  $nm+\max(\overline R)$. It is clear  that the arrows of
$T(\overline{\Sigma},\overline{R})$ can be depicted by starting
at level $0$ and drawing a red arrow $(1,m)$ every time $\overline{D}$ takes a South end and a blue arrow $(1,-n)$  every time $\overline{D}$ takes a West end, with each arrow starting where the previous arrow ended.  It is obvious that the row counts of $T(\overline{\Sigma},\overline{R})$ are all $0$ since, in  each row every red segment is followed by a blue segment. Now let $T(\Sigma,R)$ be the path diagram of same height $N$ with $\Sigma$ the $(S,W)$-word of $D$ and $R$ its rank sequence. By definition of  the Sweep map, the rank sequence  $R$ is obtained
by permuting  in  increasing order the  components of  $\overline{R} $. Since the co-primality of $(m,n)$ assures that $\overline R$ has distinct components, $R$ is necessarily a nonnegative  increasing sequence. Likewise, the word $\Sigma$ is obtained by rearranging the letters of $\overline {\Sigma}$ by the same permutation.
Thus we may say that  the same permutation can be used to change   $T(\overline{\Sigma},\overline{R})$ into $T( \Sigma , R )$. Since this operation only permutes segments  within  each row, it follows that all the row counts of $T( \Sigma , R )$ must also be  $0$. This proves the necessity.

For the sufficiency, suppose  that the path diagram $T(\Sigma,R)$ is balanced, with $D$ the Dyck path whose word is $\Sigma$  and $R$ a weakly increasing sequence.
Then by lemma 1 it follows that  for every level $j$, either i) no arrow  starts or ends at this level,  or ii) if  $k>0$  arrows  end   (start) at this level then exactly $k$ arrows    start (end) at this level.
This given, we will construct a Dyck path $\overline{D}$ by the following algorithm.
Starting at level $0$ we follow the first arrow, which we know is necessarily red
and starts at level $0$. This arrow ends at level $m$.
Since there is at least one arrow that starts at this level follow the very next arrow that does.  Proceeding recursively thereafter,
every time we reach a level, we follow the very  next arrow that starts at that level. This process stops when we are back at level $0$, and we must since in $\Sigma$ there are $n$ $S$ and $m$ $W$. Let $\overline D$ be the resulting path. Using the  colors of the successive arrows of $\overline D$ gives us the $\overline{\Sigma}$ word of $\overline{D}$. Now notice that  $\overline D$ must be a path in ${\cal D}_{m,n}$ since all its starting ranks are nonnegative due to the weakly increasing property of $R$  and therefore they must necessarily be distinct by the co-primality of $(m,n)$. In particular, if $\overline R$ denotes the sequence of starting ranks of $\overline D$ we are also forced to conclude that its components are distinct. Since the components of $\overline R$ are only a rearrangement of the components of $R$ we deduce that $R$ must have been
strictly increasing to start with. This implies that $D$ must be a Sweep map image of $\overline D$ since the  successive letters of $\Sigma$ can  be obtained by rearranging the letters of $\overline {\Sigma}$ by the same permutation that rearranges $\overline R$ to $R$. This completes the proof of sufficiency.
\end{proof}

\def \ovvR {\overline R}
\def \ovvr {\overline r}

This given,  we can easily see that the validity of our ``strong" algorithm hinges on establishing that it produces a balanced path diagram after a finite number of steps. Theorem 2 allows us to relax the strictly increasing
requirements on the rank sequences  of the successive  path diagrams produced
by the algorithm. The \texttt{WeakFindRank} algorithm, defined below, has precisely that property. This results  in a simpler proof of the termination property of both algorithms.

\subsection{Algorithm \texttt{WeakFindRank} and the Justification}\

\noindent
Algorithm \texttt{WeakFindRank}

\noindent
\textbf{Input:} A path diagram $T(\Sigma,R^{(0)})$ with  $\Sigma$ the word of a Dyck path $D\in \cal D_{m,n}$, a weakly increasing rank sequence $R^{(0)} =(r_1^{(0)},r_2^{(0)},\ldots ,r_{m+n}^{(0)})$.

\noindent
\textbf{Output:} A balanced path diagram $T(\Sigma,\ovvR)$.

It will be convenient to keep the common height equal to $N$ for all the successive  path diagrams constructed by the algorithm, where $N=U+2mn$, with $U=\max(R^{(0)})+m+1$.
\begin{enumerate}
\item[Step 1] { Starting with $T(\Sigma,R^{(0)})$ repeat the following step
until the resulting path diagram is balanced. }
\vskip .1in

\item[Step 2]
{ In $T(\Sigma,R^{(s)})$,  with  $R^{(s)} =(r_1^{(s)},r_2^{(s)},\ldots ,r_{m+n}^{(s)}),$  find the lowest row $j$ with $c(j)>0$ and
find the rightmost  arrow that starts at level $j$. Suppose that arrow
starts at $(i,j)$. Move up the arrow one level to construct the path diagram  $T(\Sigma,R^{(s+1)})$ with $ r_{i}^{(s+1)}= r_{i}^{(s)}+1$
and  $ r_{i'}^{(s+1)}= r_{i'}^{(s)}$ for all $i'\neq i$. If all the row counts are $\le 0$ then stop the algorithm, since all row counts
must  necessarily vanish.}
\end{enumerate}

\addpic[.45\textwidth]{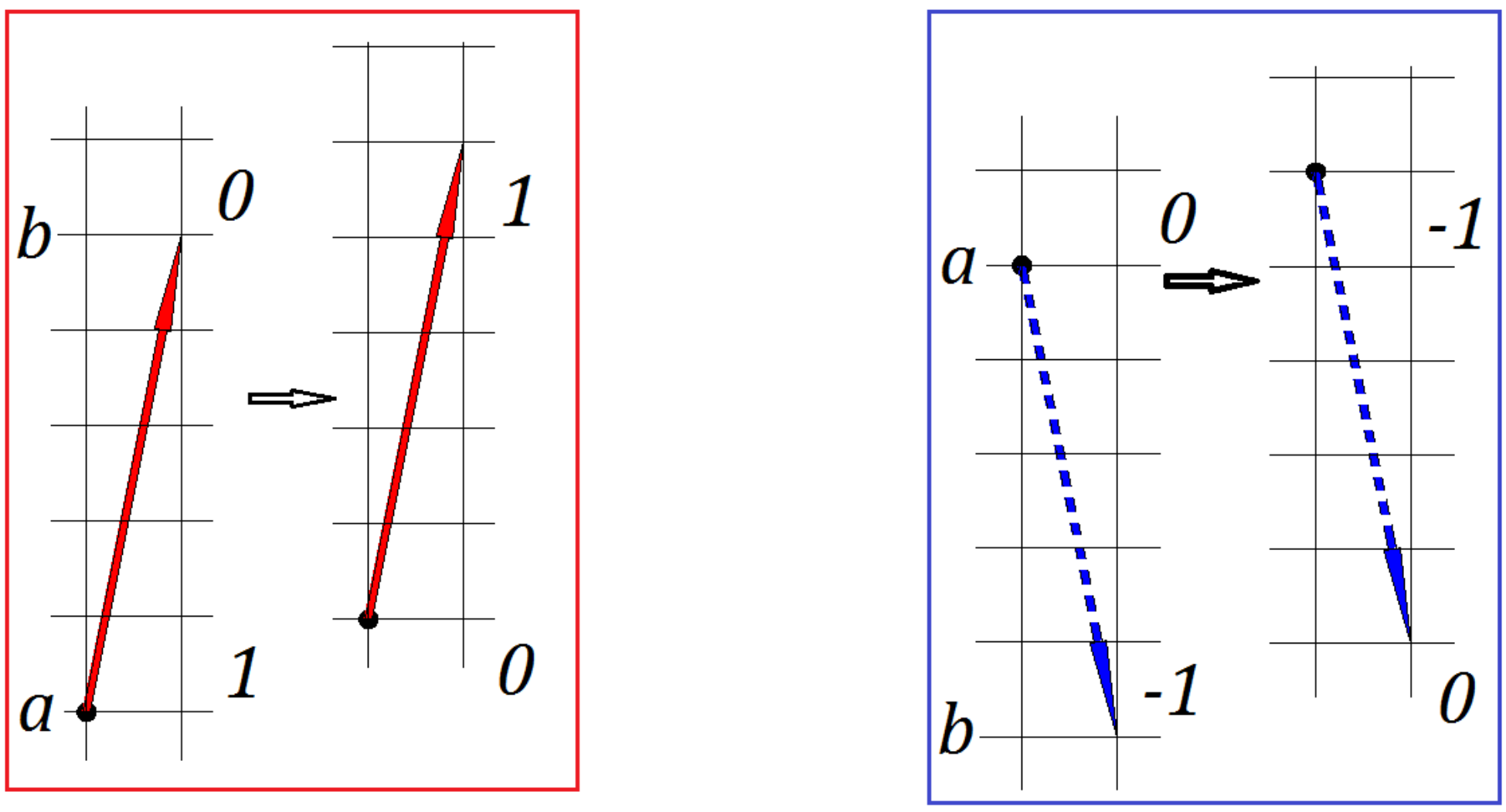}{Shifting up one unit an arrow from  level $a$ to level $b$ will decrease $c(a)$ by 1 and increase $c(b)$ by 1. \label{fig:ShiftArrow}}

Figure \ref{fig:ShiftArrow} shows that we are weakly reducing the number of rows with positive row counts in Step 2.
It also makes the following key observation evident.
\begin{lem}\label{l-keep-nonnegative}
If  at some point  $c(k)$ becomes $\ge 0$ then for ever after it will never become negative. In particular, since $c(k)= 0$ with $k>U$ for the initial path diagram $T(\Sigma,R^{(0)})$ we will have $c(k)\ge 0$ when $k>U$ for all successive path diagrams produced by the algorithm.
\end{lem}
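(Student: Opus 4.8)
The plan is to prove the monotonicity assertion by analyzing a single iteration of Step 2 and then inducting on the step index $s$; the ``in particular'' clause then follows by locating where the arrows of the initial diagram live. First I would pin down exactly how one pass of Step 2 changes the row counts. Step 2 pushes a single arrow $A$ up by one level: if $A$ starts at level $j$, it loses a segment at the bottom of its span and gains one at the top, every intermediate row it meets being traversed both before and after the shift. As Figure \ref{fig:ShiftArrow} records (and as one reads off from Lemma \ref{l-DifferenceLevelCount}), the net effect is that exactly two row counts change: $c(j)$ decreases by exactly $1$, and $c(j+m)$ (if $A$ is red) or $c(j-n)$ (if $A$ is blue) increases by exactly $1$; all other row counts are unchanged.

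Now I would bring in the selection rule: the arrow moved in Step 2 is chosen to start at the level $j$ which is the lowest row with $c(j)>0$, so $c(j)\ge 1$ in $T(\Sigma,R^{(s)})$. Combined with the previous paragraph, this shows that any row $k$ with $c(k)\ge 0$ in $T(\Sigma,R^{(s)})$ still has $c(k)\ge 0$ in $T(\Sigma,R^{(s+1)})$: if $k=j$ the count drops from a value $\ge 1$ to a value $\ge 0$, and if $k\neq j$ the count either stays the same or goes up by $1$. An immediate induction on $s$ gives the first assertion of the lemma --- once $c(k)$ becomes $\ge 0$ it remains $\ge 0$ in every later path diagram.

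For the second assertion, I would observe that in the initial diagram $T(\Sigma,R^{(0)})$ every red arrow starts at a level $r_i^{(0)}\le\max(R^{(0)})$ and so occupies only rows of index at most $\max(R^{(0)})+m-1<U$, while every blue arrow occupies rows strictly below its starting level; hence no arrow touches any row $k>U$, so $c(k)=0$ there for $T(\Sigma,R^{(0)})$. The monotonicity just proved then yields $c(k)\ge 0$ in each row $k>U$ for every path diagram the algorithm produces. I do not expect any genuine obstacle here: the only delicate point is the exact bookkeeping of which two rows change under an upward unit shift of one arrow, and once that is in hand the selection rule does the rest.
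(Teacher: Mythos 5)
Your proposal is correct and is essentially the paper's argument: the paper's entire proof is the one-line observation that ``we only decrease a row count when it is positive,'' which is exactly the content of your analysis that the only row whose count decreases in Step 2 is the selected row $j$ with $c(j)\ge 1$, while the other affected row only increases. Your additional bookkeeping (which two rows change, and why rows above $U$ are empty initially) just makes explicit what the paper leaves to Figure \ref{fig:ShiftArrow} and the choice of $U$.
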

\begin{proof}
The lemma holds true
because we only decrease a row count when it is positive.
\end{proof}

We need some basic properties to justify the algorithm.
\begin{lem}\label{l-basic-property}
We have the following basic properties.
\begin{enumerate}
\setlength\itemsep{2mm}
\item[$(i)$] { If row $j$ is the lowest with $c(j)>0$ then there is an arrow   that starts at level  $j$. In this situation, we say that we are \emph{working} with row $j$.}

\item[$(ii)$] { The successive rank sequences are  always weakly  increasing.}

\item[$(iii)$]  { If $T(\Sigma,R)$ has no positive row counts, then it is balanced. Consequently, if the algorithm terminates, the last path diagram is balanced.}
\end{enumerate}
\end{lem}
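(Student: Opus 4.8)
The plan is to verify each of the three parts of Lemma~\ref{l-basic-property} using Lemma~\ref{l-DifferenceLevelCount} and Lemma~\ref{l-keep-nonnegative} together with the combinatorial bookkeeping already set up for path diagrams. For part $(i)$, I would argue by contradiction: suppose row $j$ is the lowest row with $c(j)>0$ but no arrow starts at level $j$. Since $j$ is lowest with a positive count, we have $c(j-1)\le 0$ (if $j=0$ then $c(-1)$ should be taken as $0$ and the same inequality holds trivially). Then $c(j)-c(j-1)\ge 1$, so by Lemma~\ref{l-DifferenceLevelCount} the difference $\#\{\text{arrows starting at level }j\}-\#\{\text{arrows ending at level }j\}$ is positive, forcing at least one arrow to start at level $j$, a contradiction. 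Hence some arrow starts at level $j$, and it makes sense to ``work with'' row $j$.

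For part $(ii)$ I would show that the rank sequence stays weakly increasing under a single execution of Step~2. In $T(\Sigma,R^{(s)})$ let $j$ be the lowest row with positive count and let the arrow in column $i$ be the \emph{rightmost} arrow starting at level $j$; Step~2 increments $r_i^{(s)}$ by one. The only way weak monotonicity could fail is if some neighbouring entry now violates the order: either $r_{i-1}^{(s)} > r_i^{(s)}+1$, which is impossible since $R^{(s)}$ was weakly increasing so $r_{i-1}^{(s)}\le r_i^{(s)}< r_i^{(s)}+1$, or $r_{i+1}^{(s)} < r_i^{(s)}+1$, i.e.\ $r_{i+1}^{(s)} = r_i^{(s)} = j$. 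But the latter would mean column $i+1$ also has an arrow starting at level $j$, contradicting the choice of column $i$ as the \emph{rightmost} such arrow. Therefore $R^{(s+1)}$ is again weakly increasing, and by induction all successive rank sequences are weakly increasing. This is where the ``rightmost'' specification in the algorithm is used, and it is the one genuinely delicate point in the lemma.

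For part $(iii)$, suppose $T(\Sigma,R)$ has no row with positive count, i.e.\ $c(j)\le 0$ for all $j$. I would invoke the fact that the total signed count over all rows is fixed: summing $c(j)=c^r(j)-c^b(j)$ over all rows gives $n\cdot m - m\cdot n = 0$, because each of the $n$ red arrows contributes exactly $m$ red segments and each of the $m$ blue arrows contributes exactly $n$ blue segments. (Alternatively one can telescope Lemma~\ref{l-DifferenceLevelCount} from level $0$ up to level $N$, where $N$ is large enough that no arrow starts or ends near the top or bottom boundary, obtaining $c(N)-c(0)=0$ together with $c(0)=0$ and $c(N)=0$; either way the sum of the row counts is $0$.) Since every term is $\le 0$ and the terms sum to $0$, every term must be $0$, so $T(\Sigma,R)$ is balanced. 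The second assertion of $(iii)$ is then immediate: Step~1 only halts when the diagram is balanced, or Step~2 halts when all row counts are $\le 0$, and by what was just shown this is the same thing, so the final path diagram the algorithm outputs is balanced.

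The main obstacle I anticipate is part $(ii)$ — specifically, making the case analysis around the moved arrow airtight and seeing precisely why the tie-breaking rule (take the \emph{rightmost} arrow starting at the working level) is exactly what prevents a monotonicity violation. Parts $(i)$ and $(iii)$ are short once Lemma~\ref{l-DifferenceLevelCount} is in hand; $(i)$ is a one-line contradiction and $(iii)$ is a ``nonpositive terms summing to zero'' argument, so neither should present difficulty.
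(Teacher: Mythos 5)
Your proposal is correct and follows essentially the same route as the paper: part $(i)$ via Lemma~\ref{l-DifferenceLevelCount} and $c(j-1)\le 0$, part $(ii)$ via the rightmost-arrow tie-breaking rule (which the paper dispatches in one sentence and you merely spell out in more detail), and part $(iii)$ via the fact that the row counts sum to $nm-mn=0$ so nonpositive counts must all vanish.
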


\begin{proof}\

\begin{enumerate}\setlength\itemsep{2mm}

\item[$(i)$]    By the choice of $j$, we have $c(j)>0$ and $c(j-1)\le 0$. Thus $c(j)-c(j-1)>0$, which by Lemma \ref{l-DifferenceLevelCount}, shows that there is at least one arrow starting at rank $j$.

\item[$(ii)$]   Our choice of $i$ in step (2) assures that the next rank
sequence remains weakly increasing.

\item[$(iii)$]  Since each of our path diagrams  $T(\Sigma,R)$ has $n$ red arrows
of length $m$ and $m$ blue arrows of length $n$, the total sum of row counts of any $T(\Sigma,R)$ has to be $0$. Thus if $T(\Sigma,R)$ has no positive row counts, then it must have no negative row counts either, and is hence balanced.
 \end{enumerate}
\vspace{-8mm}
 \end{proof}

\begin{proof}[Justification of Algorithm \texttt{WeakFindRank}]
By Lemma \ref{l-basic-property}, we only need to show that the algorithm terminates.
To prove this we need the following auxiliary result.

\begin{lem}\label{l-redcount}
Suppose we are working with row $k$, that is  $c(k)>0$ and $c(i)\le 0$ for all $i<k$. If row $\ell $ has no segments for some $\ell <k$, then the current path diagram $T(\Sigma,R')$ has no segments below row $\ell$.
\end{lem}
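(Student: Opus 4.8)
\textbf{Proof proposal for Lemma \ref{l-redcount}.}

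The plan is to argue by contradiction: suppose row $\ell$ has no segments but some row below $\ell$ does carry a segment. First I would observe that a row carrying no segments forces something strong on the arrows: an arrow of $T(\Sigma,R')$ either lies entirely below line $\ell$, or entirely above line $\ell$, since each arrow is a single vector $(1,m)$ or $(1,-n)$ and an arrow straddling line $\ell$ would deposit a segment in row $\ell-1$ or in row $\ell$ (indeed, a red arrow starting at rank $r$ occupies rows $r, r+1,\dots, r+m-1$ and a blue arrow starting at rank $r$ occupies rows $r-1, r-2, \dots, r-n$, so ``no segment in row $\ell$'' means no red arrow starts in $\{\ell-m+1,\dots,\ell\}$ and no blue arrow starts in $\{\ell+1,\dots,\ell+n\}$). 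Consequently the set of arrows splits into a ``low'' block entirely below line $\ell$ and a ``high'' block entirely at or above line $\ell$, and by the weakly increasing property of the rank sequence (Lemma \ref{l-basic-property}(ii)) the low block consists of the arrows in an initial segment of columns $1,2,\dots,t$ and the high block of columns $t+1,\dots,m+n$.

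Next I would examine the sum of the row counts over the rows strictly below line $\ell$, i.e. $\sum_{i<\ell} c(i)$. By the splitting just established, every segment counted here comes from an arrow of the low block, and conversely every segment of every low-block arrow lies strictly below $\ell$; so this partial sum equals $(\text{number of red arrows in the low block})\cdot m - (\text{number of blue arrows in the low block})\cdot n$. Now bring in the hypothesis that we are working with row $k$: for every $i<k$ we have $c(i)\le 0$, and since $\ell<k$ this gives $\sum_{i<\ell}c(i)\le 0$. If the low block contains $a$ red and $b$ blue arrows, this reads $am - bn\le 0$.

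Now I would derive the contradiction. The low block of arrows, read in column order $1,\dots,t$, is a genuine path segment (it is the restriction of the path obtained by concatenating arrows) starting at level $0$; the hypothesis ``some row below $\ell$ has a segment'' says the low block is nonempty, i.e. $t\ge 1$, so in particular it begins with the first arrow of $\Sigma$, which is red and starts at level $0$. The key point is that this low block, starting at $0$ and living strictly below line $\ell$, must also \emph{end} at level $0$: its end rank is $am-bn$, a value that must be $\ge 0$ because all ranks in $R'$ are nonnegative (this nonnegativity is maintained throughout — it is built into the input and preserved by Step 2, and is exactly the content used elsewhere via Lemma \ref{l-keep-nonnegative} together with weak monotonicity), yet we also just showed $am-bn\le 0$; hence $am=bn$, and by co-primality of $(m,n)$ we get $a = cn$, $b = cm$ for some integer $c\ge 1$ (with $c\ge 1$ since the block is nonempty and contains the initial red arrow). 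But then the low block already uses $cn$ red and $cm$ blue arrows while the whole diagram has only $n$ red and $m$ blue; forcing $c=1$, so the low block is \emph{all} the arrows, the high block is empty, and line $\ell$ is above the entire diagram. That contradicts $\ell<k$ together with the existence of row $k$ having $c(k)>0$ (a positive row count requires a segment in row $k\ge\ell$, hence an arrow reaching at or above line $\ell$). This contradiction proves the lemma.

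I expect the main obstacle to be the first paragraph: carefully justifying that an empty row cleanly partitions the arrows into a low block and a high block occupying consecutive columns, and in particular pinning down exactly which starting ranks are excluded by ``row $\ell$ is empty.'' Once that structural dichotomy is in hand, the counting argument and the invocation of co-primality are routine. One should also double-check the edge bookkeeping — whether ``row $\ell$'' versus ``line $\ell$'' shifts the index by one — but this does not affect the logic of the split.
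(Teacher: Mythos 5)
Your structural analysis in the first two paragraphs is correct and in fact makes the paper's own argument (the region decomposition of Figure \ref{fig:emptyrow}) more explicit: an empty row $\ell$ forces every arrow to lie entirely below or entirely above line $\ell$, weak monotonicity of $R'$ makes the low block an initial segment of columns $1,\dots,t$, and $\sum_{i<\ell}c(i)=am-bn\le 0$ where $a,b$ count the red and blue arrows of the low block. Up to this point you are on the paper's track.

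The gap is in your justification of $am-bn\ge 0$. You assert that the end rank of the low block "must be $\ge 0$ because all ranks in $R'$ are nonnegative." But $am-bn$ is not a rank of the path diagram $T(\Sigma,R')$ at all: in $T(\Sigma,R')$ the arrows do not concatenate, so the low block has no ``end rank'' there, and nonnegativity of the entries of $R'$ (the current starting levels of the arrows) says nothing about the quantity $am-bn$. That quantity is the rank of the $t$-th vertex of the Dyck path $D$ obtained by concatenating the arrows of $\Sigma$ head-to-tail from level $0$ --- precisely your parenthetical ``the path obtained by concatenating arrows'' --- and its nonnegativity comes from the hypothesis that $\Sigma$ is the $(S,W)$-word of a path in $\mathcal{D}_{m,n}$, a hypothesis you never invoke. (The same conflation appears in ``the first arrow \dots starts at level $0$'': true of the concatenated path $D$, not of $T(\Sigma,R')$, whose first entry $r_1$ need not be $0$.) The fix is one line and is exactly what the paper does: since $\Sigma$ is a Dyck word and $(m,n)$ is coprime, the intermediate ranks of $D$ are strictly positive, so $am-bn>0$ whenever $1\le t<m+n$; here $t\ge 1$ because the low block is nonempty, and $t<m+n$ because the arrow starting at level $k>\ell$ (which exists by Lemma \ref{l-basic-property}(i)) belongs to the high block. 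This contradicts $am-bn\le 0$ directly and also lets you discard the closing case analysis ($am=bn$, $a=cn$, $b=cm$, $c=1$), which becomes unnecessary.
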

\begin{proof}\hspace{-2cm}\vspace{-1cm}\addpic[.38\textwidth]{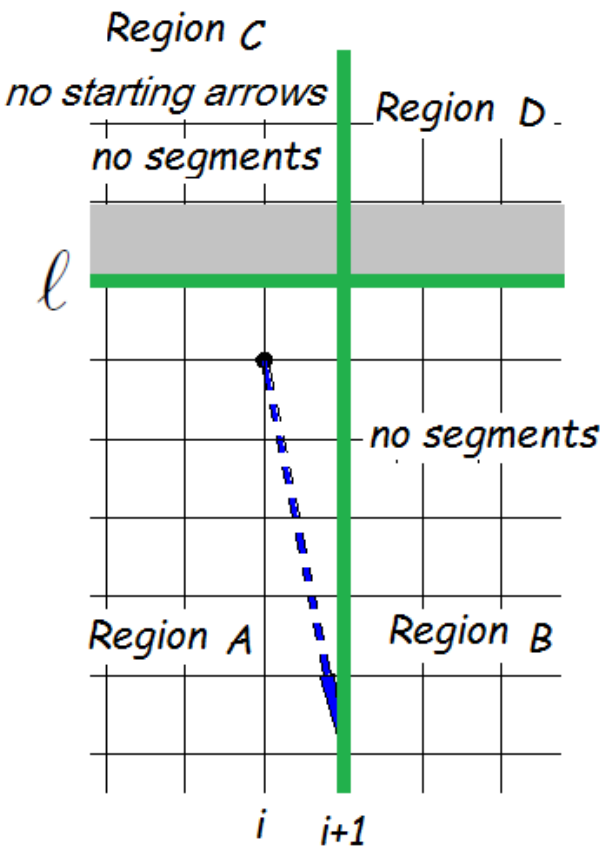}{When row $\ell$ has no segments, there will be no segments in regions $B$ and  $C$.\label{fig:emptyrow}}
\vspace{.1cm} \hspace{5mm}
Suppose to the contrary that $T(\Sigma,R')$ has  a segment below row $\ell$, then
let $V$ be the right most arrow that contains such a segment and say that it starts
at column $i$. Since row $\ell$ has no segments, the   starting rank of $V$ must be $\le \ell$. This implies that $i+1<m+n$ since the arrow that starts at level $k$ must be to the right of $V$ (by the increasing property of $R'$).  This given,  the current path diagram could look  like in
Figure \ref{fig:emptyrow}, where the two green (thick) lines divide the plane into 4 regions, as labelled in the display.

The weakly increasing property of $R'$  forces no starting ranks in  $C$, therefore there are no segments there. By the choice of $V$ there cannot be any segments   in $B$. Thus the  (gray) empty row $\ell$  forces no segments within both
$B$ and $C$.

Now notice that since $\Sigma$ is the word of a path $D\in \mathcal D_{m,n}$ the number of red segments to the left of column $i+1$ minus the number of blue segments to the left of that column must result in a number $s>0$.
 However, since $c(j)\le 0$ for all $j\le \ell$ it follows that $c(0)+\cdots +c(\ell-1)\le 0$. But since regions $B$ and $C$ have no segments it also follows that $s=c(0)+\cdots +c(\ell-1)\le 0$, a contradiction.
\end{proof}

 \vskip -.1in
Next observe that since each step of the algorithm increases one of the ranks by one unit, after $M$ steps we will have  $|R^{(M)}-R^{(0)}|=\sum_{i=1}^{m+n}r_i^{(M)}-\sum_{i=1}^{m+n}r_i^{(0)} =M$. This given, if the algorithm iterates Step 2 forever, then the maximum rank  will eventually exceed any given integer. In particular, we will end up working  with row $k$ with $k$ so  large that $k-U$ exceeds the total number $mn$ of red segments. At that point we will have $c(k)>0$ and  $c(j)=0$ for all
the $k-U$ values $j=U,U+1,\dots, k-1$. The reason for this is that we must have $c(j)\le 0$ for all $0\le j < k$ and by Lemma \ref{l-keep-nonnegative} we must also have
$c(j)\ge 0$ for $j\ge U$. Now, by the  pigeon hole principle, there must also be
some $U\le \ell <k$ for which $c^r(\ell )=0$. But then it follows that $c^b(\ell)=c^r(\ell)-c(\ell)=0$, too. That means that row  $\ell$ contains no segments.  Then   Lemma \ref{l-redcount} yields that there cannot be any segments  below row $\ell$ either. This implies that the  total row count is
$
\sum_{j\ge 0} c(j) = \sum_{j\ge U} c(j) \ge c(k)>0
$,
a contradiction.
\end{proof}

Thus the \texttt{WeakFindRank} algorithm  terminates and we can draw the following important conclusion.

\begin{theo}\label{t-onto}
Given any $(m,n)$-Dyck path $D$ with $(S,W)$-word $\Sigma$ and any initial weakly increasing rank sequence $R^{(0)}=(r_1^{(0)},r_2^{(0)},\cdots ,r_{m+n}^{(0)})$,
let $T(\Sigma,\oR)$ be the balanced path diagram produced by the \texttt{WeakFindRank} algorithm. Then the rank sequence \\ $\oR=(\orr_1,\orr_2,\cdots ,\orr_{m+n})$ will
be strictly increasing. Moreover, the sequence
$$
\widetilde R= (0,\orr_2-\orr_1,\orr_3-\orr_1,\ldots,\orr_{m+n}-\orr_1)
$$
is none other than the increasing rearrangement of the rank sequence of
a pre-image $\overline D$ of $D$ under the sweep map.
\end{theo}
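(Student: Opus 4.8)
The plan is to prove Theorem~\ref{t-onto} by combining the termination of \texttt{WeakFindRank} (already established above) with the characterization Theorem~\ref{t-characterization}. First I would invoke Lemma~\ref{l-basic-property}(iii): since the algorithm terminates, the final path diagram $T(\Sigma,\oR)$ has no positive row counts, hence is balanced. By Lemma~\ref{l-basic-property}(ii) the sequence $\oR$ is weakly increasing, and all its components are nonnegative (the initial $R^{(0)}$ has nonnegative entries and entries only increase). So $T(\Sigma,\oR)$ is a balanced path diagram attached to a weakly increasing, nonnegative rank sequence whose word $\Sigma$ is the $(S,W)$-word of $D\in\mathcal D_{m,n}$.

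The next step is to extract the strict increase of $\oR$. Here I would rerun the reconstruction argument from the sufficiency half of Theorem~\ref{t-characterization}: because $T(\Sigma,\oR)$ is balanced, Lemma~\ref{l-DifferenceLevelCount} shows that at every level the number of arrows starting equals the number ending; one then traces out a closed path $\overline D$ starting at level~$0$ with the first (necessarily red) arrow, always following the next unused arrow starting at the currently reached level. This $\overline D$ has nonnegative starting ranks (inherited from the weak increase of $\oR$), and co-primality of $(m,n)$ forces those ranks to be pairwise distinct; since the multiset of starting ranks of $\overline D$ is exactly the multiset of entries of $\oR$, we conclude $\oR$ itself had distinct entries, i.e. is strictly increasing. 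One small wrinkle to address: the statement wants $r_1=0$ eventually, which is why we pass to $\widetilde R$; but to apply Theorem~\ref{t-characterization} directly I would first observe that a simultaneous downward shift of all ranks by $\orr_1$ leaves every row count unchanged (it just translates the whole diagram vertically, provided $N$ was chosen large enough to avoid clipping — which $N=U+2mn$ guarantees), so $T(\Sigma,\widetilde R)$ is again a balanced path diagram with the same word, now with first rank $0$ and still strictly increasing.

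Finally I would apply Theorem~\ref{t-characterization} to $\widetilde R$: it is weakly (indeed strictly) increasing with $\widetilde r_1=0$, and $T(\Sigma,\widetilde R)$ is balanced, so by the ``if'' direction $\widetilde R$ is a rearrangement of the rank sequence of a pre-image $\overline D$ of $D$ under the sweep map; since $\widetilde R$ is in increasing order, it is precisely the increasing rearrangement of that rank sequence. This gives both assertions of the theorem.

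I expect the only genuinely delicate point to be the bookkeeping around the fixed height $N$ and the vertical shift by $\orr_1$ — one must check that throughout the algorithm no arrow is pushed past level $N$ (so that no segment is silently lost and the row-count identities remain valid), and that after shifting down by $\orr_1$ everything still fits in the rectangle. The choice $N=U+2mn$ with $U=\max(R^{(0)})+m+1$ is tailored exactly for this, and the termination analysis already bounds how high arrows can travel, so this is routine once spelled out; everything else is a direct appeal to the lemmas and Theorem~\ref{t-characterization}.
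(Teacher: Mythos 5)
Your proposal is correct and takes essentially the same route as the paper: the paper's proof simply notes that $T(\Sigma,\widetilde R)$ (redrawn with a fresh height $N=\widetilde r_{m+n}+m+1$, which sidesteps your clipping concern) is still balanced, and then applies Theorem~\ref{t-characterization} to conclude both that $\widetilde R$ is the increasing rearrangement of the rank sequence of a pre-image and that $\overline R$ is strictly increasing. Your additional re-tracing of the reconstruction argument and the bookkeeping on $N$ are sound but amount to re-deriving what the citation of Theorem~\ref{t-characterization} already provides.
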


\vskip -.5in
\begin{proof}
Clearly, the path diagram $T(\Sigma,\widetilde R)$
of height $N={\widetilde r}_{m+n}+m+1$ will also be balanced. Thus, by Theorem 2,   $\widetilde R$ must be the increasing
 rearrangement of the  rank sequence of  a pre-image $\overline{D}$ of
$D$. In particular not only $\widetilde R$ but also $\oR$ itself must be strictly increasing.
\end{proof}

 This result has the following important corollary.
  \begin{theo}\label{t-important}
  For any co-prime pair $(m,n)$ the rational $(m,n)$-sweep map is invertible.
 \end{theo}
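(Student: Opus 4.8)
The plan is to derive Theorem~\ref{t-important} as an immediate consequence of Theorem~\ref{t-onto} (equivalently of Theorem~\ref{t-characterization}), since almost all of the work has already been done. The sweep map is a well-defined map $\cal D_{m,n}\to\cal D_{m,n}$ by Lemma~\ref{l-sweep-image-Dyck}, so to prove invertibility I need only establish that it is a bijection; because $\cal D_{m,n}$ is finite (there are exactly $\binom{m+n}{n}/(m+n)$ such paths), it suffices to show the sweep map is \emph{surjective}, or alternatively \emph{injective}. I would in fact get both for free.

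First I would prove surjectivity. Let $D\in\cal D_{m,n}$ be arbitrary with $(S,W)$-word $\Sigma$. Feed the pair $(\Sigma, R^{(0)})$ into the \texttt{WeakFindRank} algorithm, taking any convenient weakly increasing initial rank sequence $R^{(0)}$ with $r_1^{(0)}=0$ (for instance the one used in the running example, or simply $R^{(0)}=(0,1,2,\dots,m+n-1)$). By the Justification of Algorithm \texttt{WeakFindRank} the algorithm terminates, producing a balanced path diagram $T(\Sigma,\oR)$, and by Theorem~\ref{t-onto} the normalized sequence $\widetilde R=(0,\orr_2-\orr_1,\dots,\orr_{m+n}-\orr_1)$ is the increasing rearrangement of the rank sequence of a genuine pre-image $\overline D\in\cal D_{m,n}$ of $D$ under the sweep map. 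Hence every $D$ has a pre-image, so the sweep map is onto, and therefore — being a self-map of a finite set — it is a bijection.

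Alternatively, and perhaps more cleanly, I would invoke Theorem~\ref{t-characterization} directly to get injectivity as well. Fix $D$ with $(S,W)$-word $\Sigma$. Suppose $\overline D$ and $\overline D'$ are both pre-images of $D$. Their rank sequences, sorted increasingly and then normalized to start at $0$, each give a weakly (indeed strictly) increasing sequence $R$ with $r_1=0$ for which $T(\Sigma,R)$ is balanced. But the \emph{balanced} condition together with $\Sigma$ being the $(S,W)$-word of $D$ pins down $R$ uniquely: Lemma~\ref{l-DifferenceLevelCount} shows that in a balanced diagram the arrows must chain together into a single closed path starting and ending at level $0$, and that path is forced step-by-step (at each reached level there is a unique arrow continuing). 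So the sorted normalized rank sequence of any pre-image is uniquely determined by $D$; since a Dyck path in $\cal D_{m,n}$ is recovered from the increasing rearrangement of its rank sequence together with the rule that consecutive ranks differ by $+m$ or $-n$ — here using co-primality, which guarantees the ranks are distinct so the reconstruction is unambiguous — we conclude $\overline D=\overline D'$. Combined with the surjectivity above, the sweep map is a bijection, i.e.\ invertible.

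There is essentially no obstacle here: the corollary is a packaging statement, and both the existence of a pre-image (termination of \texttt{WeakFindRank} plus Theorem~\ref{t-onto}) and its uniqueness (the balanced-diagram characterization of Theorem~\ref{t-characterization} plus the co-primality argument already present in the proof of that theorem and of Lemma~\ref{l-sweep-image-Dyck}) have been proved. The only point requiring a word of care is the passage from ``a pre-image exists for every $D$'' to ``the map is invertible'': one should note explicitly that the sweep map sends $\cal D_{m,n}$ into $\cal D_{m,n}$ (Lemma~\ref{l-sweep-image-Dyck}) and that $\cal D_{m,n}$ is finite, so surjectivity already forces bijectivity; the independent uniqueness argument is then a bonus that also yields an honest two-sided inverse, namely the map $D\mapsto\overline D$ computed by \texttt{WeakFindRank} (or by the more efficient \texttt{StrongFindRank}).
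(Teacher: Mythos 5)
Your main argument is exactly the paper's proof: Lemma~\ref{l-sweep-image-Dyck} shows the sweep map sends $\mathcal{D}_{m,n}$ into $\mathcal{D}_{m,n}$, Theorem~\ref{t-onto} (via termination of \texttt{WeakFindRank}) gives surjectivity, and finiteness of $\mathcal{D}_{m,n}$ upgrades this to bijectivity. The ``bonus'' injectivity argument is not needed and is the only shaky part --- the claim that the balanced condition pins down $R$ uniquely is not actually established by Theorem~\ref{t-characterization}, and a balanced diagram can have several arrows starting at the same level --- but since the surjectivity-plus-finiteness route is complete, the proof stands.
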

 \begin{proof}
Lemma \ref{l-sweep-image-Dyck} shows that the rational $(m,n)$-sweep map is into. Theorem \ref{t-onto} gives a proof (independent of the Thomas-Wiliams proof)
 that it is onto. Since the collection of  $(m,n)$-Dyck paths is finite,
 the sweep map must be bijective.
 \end{proof}

Figure \ref{fig:fourpics} depicts the entire history of the  \texttt{WeakFindRank} algorithm applied to a $(7,5)$-Dyck path $D$. Both $D$ (on the left) and its pre-image
$\overline D$ (on the right) are depicted below.
\begin{figure}[!ht]
\centering{
\mbox{\includegraphics[height=5.5in]{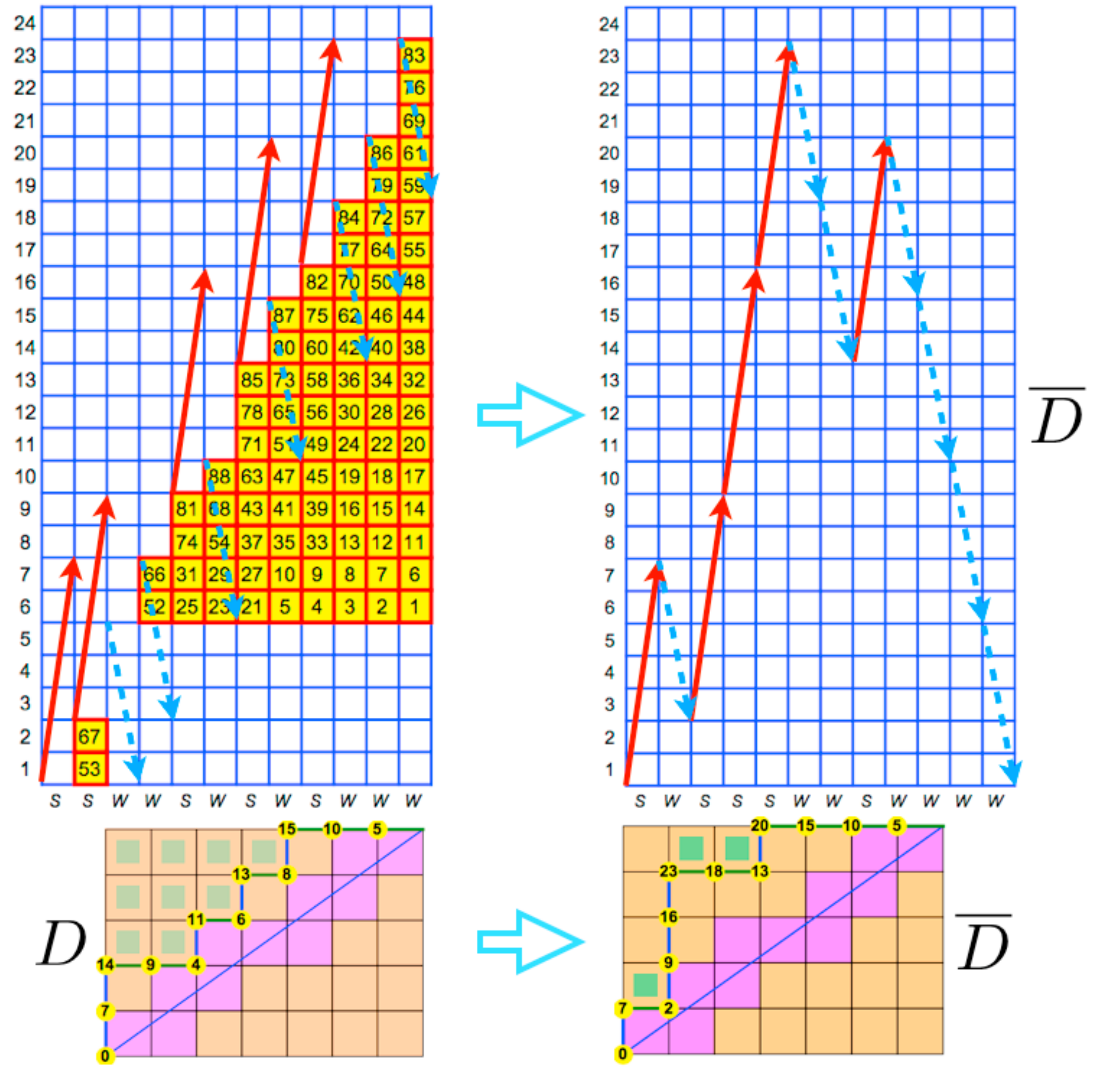}}
}
\caption{The top left picture is the balanced path diagram produced by the \texttt{WeakFindRank} algorithm. The top right picture
is obtained by Theorem \ref{t-characterization}. 
 \label{fig:fourpics}  }
\end{figure}

\vskip -.01in

The initial rank sequence that was used here is $R^{(0)}=(0,0,5,5,5,5,5,5,5,5,5)$. A boxed lattice square in column $i$ with
an integer $k$  inside  indicates that  arrow $A_i$ was processed at
 the $k^{th}$  step of the algorithm. As a result $A_i$ was lifted  from the level  of the bottom of the square to its top level. For instance the square with $63$ inside indicates that the red  arrow $A_7$ was lifted at the $63^{rd}$ step of the algorithm from starting at level $9$ to starting at level $10$. We also see that the last time that the arrow $A_8$ reached its final starting level at step $87$. The successive final starting levels of arrows $A_1,A_2,\ldots ,A_{12}$
 give  the increasing rearrangement of the ranks of the path $\overline D$.
 Notice, arrows $A_1$ and $A_3$ where never lifted. 

\def \TR {\widetilde R}
\def \tr {\widetilde r}
\def \oR {\overline R}
\def \orr {\overline r}

\section{Tightness of Algorithm \texttt{WeakFindRank} }
Following the notations in Theorem \ref{t-onto}, the number of steps needed for Algorithm \texttt{WeakFindRank} is
$|\overline R|-|R^{(0)}| = |\TR|-|R^{(0)}| + (m+n) \bar r_1.$ We will show that a specific starting path diagram can be chosen so that $\bar r_1 =0$.

For  two rank sequences $R=(r_1,r_2,\ldots ,r_{n+m})$ and
$R'=(r_1',r_2',\ldots ,r_{n+m}')$ let us write $R\preceq R'$ if and only if
we have $r_i\le r_i'$ for all $1\le i\le m+n$, if $r_i< r_i'$ for at least one $i$
we will write $R\prec R'$. The \emph{distance} of $R$ from $R'$,
will be expressed by the integer
 $$
| R'-R| \, =\, \sum_{i=1}^{m+n} (r_i'-r_i)=\, \sum_{i=1}^{m+n} r_i'- \sum_{i=1}^{m+n} r_i.
$$

Given the $(S,W)$-word $\Sigma$ of an $(m,n)$-Dyck path $D$, we will
call  the initial starting sequence $R^{(0)}$  \emph{canonical for $\Sigma$} if it is obtained by replacing the first string of $S$ in $\Sigma$ by $0$'s and all the remaining letters by $m$, and call the balanced path diagram $T(\Sigma,\TR)$
yielded by Theorem \ref{t-onto} \emph{canonical for $\Sigma$}. Clearly $R^{(0)} \preceq \TR$.
This given, we can prove the following remarkable result.

\begin{theo}\label{t-tight}
Let $\Sigma$ be the $(S,W)$-word of a Dyck path $D\in \mathcal{D}_{m,n}$.
If $R^{(0)}$ and $T(\Sigma,\TR)$ are  canonical for $\Sigma$ and $R$ is any  increasing sequence which satisfies the inequalities
$$
R^{(0)} \preceq R\preceq \TR
$$
then the  \texttt{WeakFindRank} algorithm with starting path diagram $T(\Sigma,R)$
will have as output the rank sequence $\TR$.
\end{theo}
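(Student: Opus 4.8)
The plan is to show that starting the \texttt{WeakFindRank} algorithm from \emph{any} increasing sequence $R$ sandwiched between $R^{(0)}$ and $\TR$ still produces $\TR$ as its output. The natural strategy is a \emph{monotonicity} (or ``confluence'') argument: the algorithm is deterministic, and its output depends on the input only through the balanced path diagram it must reach, so it suffices to prove that (a) the output of \texttt{WeakFindRank} on input $R$ is $\preceq \TR$, and (b) it is $\succeq \TR$, forcing equality. For (a) I would argue that $\TR$ is itself a balanced rank sequence with $R \preceq \TR$, and that the algorithm, when run from $R$, can never overshoot a coordinate past its value in $\TR$; for (b) I would show the algorithm never stops strictly below $\TR$ because such a diagram could not be balanced.

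The key technical step is an \textbf{invariant}: \emph{if $R^{(s)} \preceq \TR$ before Step 2, then $R^{(s+1)} \preceq \TR$.} To see this, suppose Step 2 lifts the rightmost arrow $A_i$ starting at level $j$, where $j$ is the lowest row with $c(j) > 0$ in $T(\Sigma, R^{(s)})$. I must rule out $r_i^{(s)} = \bar r_i$, i.e. that $A_i$ is already at its target level. If it were, then since $R^{(s)} \preceq \TR$ and both sequences are weakly increasing, all arrows $A_{i'}$ with $i' \ge i$ would also be at their target levels $\bar r_{i'}$ (because $r_{i'}^{(s)} \ge r_i^{(s)} = \bar r_i$ forces $r_{i'}^{(s)} \geq \bar r_i$, but also $r_{i'}^{(s)} \le \bar r_{i'}$; combined with the fact that in $\TR$ the arrows from column $i$ onward occupy consecutive ranks $\bar r_i, \bar r_i+1, \ldots$ by strict increase, a squeezing argument pins them down). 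One then compares the count $c(j)$ contributed in $T(\Sigma, R^{(s)})$ with the corresponding count in $T(\Sigma, \TR)$: the arrows in columns $\ge i$ contribute identically to both diagrams at and below level $j$, while arrows in columns $< i$ start at levels $\le r_i^{(s)} = \bar r_i \le j$ — here I would use Lemma \ref{l-DifferenceLevelCount} together with $c(j') \le 0$ for $j' < j$ in the current diagram and $c(j') = 0$ everywhere in $\TR$ to derive $c(j) \le 0$, contradicting $c(j) > 0$. Hence $r_i^{(s)} < \bar r_i$, so $r_i^{(s+1)} = r_i^{(s)} + 1 \le \bar r_i$, and the invariant persists.

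Granting the invariant, the output $\oR$ of \texttt{WeakFindRank} on input $R$ satisfies $\oR \preceq \TR$; and since $\oR$ is the rank sequence of a balanced path diagram (Lemma \ref{l-basic-property}(iii)), Theorem \ref{t-characterization} applies to $\widetilde{\oR} := (0, \orr_2 - \orr_1, \ldots)$. But $R^{(0)} \preceq R \preceq \oR$ forces $\orr_1 = 0$ (the first coordinate can only stay at its initial value $0$ since ranks are nonnegative and weakly increasing and $r_1^{(0)} = 0$), so $\widetilde{\oR} = \oR$ is the increasing rearrangement of the rank sequence of a pre-image of $D$. By uniqueness of the pre-image (Theorem \ref{t-important}, invertibility of the sweep map) this rearrangement is unique, and $\TR$ is \emph{also} such a rearrangement (Theorem \ref{t-onto}), whence $\oR = \TR$.

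The main obstacle I anticipate is the ``squeezing'' sub-argument inside the invariant: carefully justifying that once the lifted arrow $A_i$ has reached its target rank $\bar r_i$, \emph{all} later arrows are simultaneously at target, and that this makes the count $c(j)$ in the current diagram bounded above by the (zero) count in $\TR$. This requires exploiting strict monotonicity of $\TR$ (so the arrows $A_i, A_{i+1}, \ldots$ occupy an \emph{interval} of ranks) in combination with the weak monotonicity of $R^{(s)}$ and the sandwich $R^{(s)} \preceq \TR$; the bookkeeping of red/blue segment counts below level $j$ via Lemma \ref{l-DifferenceLevelCount} is where the real care is needed, and it is essentially a refinement of the region-splitting argument already used in the proof of Lemma \ref{l-sweep-image-Dyck} and Lemma \ref{l-redcount}.
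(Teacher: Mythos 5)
Your overall architecture (establish the invariant $R^{(s)}\preceq\TR$, then conclude by balancedness plus uniqueness of the pre-image) is reasonable, and your closing step is fine: once $\oR\preceq\TR$ is known, $\orr_1=\tr_1=0$, and Theorems \ref{t-characterization} and \ref{t-important} do force $\oR=\TR$. But the key technical step — ruling out that Step 2 would lift an arrow $A_i$ already at its target, $r_i^{(s)}=\tr_i=j$ — has a genuine gap. Your ``squeezing'' claim that all arrows $A_{i'}$ with $i'\ge i$ must then sit at their targets is false: strict increase of $\TR$ only gives $\tr_{i'}\ge \tr_i+(i'-i)$, not that the targets form a block of consecutive integers, so from $j< r_{i'}^{(s)}\le \tr_{i'}$ nothing is pinned down. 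Without that, the proposed comparison of $c(j)$ in $T(\Sigma,R^{(s)})$ against the zero count in $T(\Sigma,\TR)$ does not close: lowering an arrow's starting rank moves its segments in and out of row $j$ non-monotonically (a red arrow lowered past $j-m+1$ leaves row $j$, a blue arrow lowered from above enters it), so the columns $<i$ do not contribute ``at most as much'' to $c(j)$ as they do in the balanced diagram. This is exactly the hard case, and no direct counting argument of the kind you sketch is given in the paper either.

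The paper resolves it by a different device. It inducts on the distance $|\TR-R|$ rather than on the step number $s$, and in the problematic case it does not compare diagrams at all: it picks the rightmost index $i'$ with $r_{i'}<\tr_{i'}$ among those of minimal rank value, forms the auxiliary sequence $R''$ by lifting $A_{i'}$ by one (so $R''\preceq\TR$, $|\TR-R''|=K$, and $c(k)$ is still positive), and invokes the induction hypothesis to conclude that the algorithm run from $R''$ outputs $\TR$. But that run must eventually work on row $k$ to kill its positive count, and $A_i$ remains forever the rightmost arrow starting at level $k$ (arrows to its right start higher and only move up), so the algorithm would be forced to push $r_i$ above $\tr_i$ — contradicting that the output is $\TR$. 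Note this detour applies the induction hypothesis to a sequence $R''$ that is \emph{not} produced by running the algorithm from $R$, which is why the induction must be on $|\TR-R|$; your induction on $s$ cannot accommodate it. As written, your proof does not go through without importing this (or some equally nontrivial) replacement for the squeezing step.
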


\begin{proof}
Notice if $|\TR-R|=0$ there is nothing to prove. Thus we will proceed
by induction on the distance of $R$ from $\TR$.
Now assume the theorem holds for $|\TR-R|=K$.
We need to show that it also holds for $|\TR-R|=K+1$.
Suppose one application of step (2) on $R$ gives $R'$.
We need to show that $R'\preceq \TR $. This done since $R$ and $R'$ only differ from one unit  we will have $|\TR-R'|=K$ and then the inductive hypothesis would complete the proof.

Thus assume if possible that this step (2) cannot be carried out because it requires increasing by one unit an $r_i=\tr_i$ .
Suppose further that under this  step (2) the level $k$ was the lowest with $c(k)>0$ and thus the arrow $A_i$ was the right most that started at level $k$. In particular this means that $r_i=\tr_i=k$. Since $|  \TR-R|\ge 1$, there is at least one $i'$ such that $r_{i'}<\tr_{i'}$. If $r_{i'}=k'$ let $i'$ be the right most
with $r_{i'}=k'$. Define $R''$ to be the rank sequence obtained by replacing  $r_{i'}$ by
$r_{i'}+1$ in $R$. The row count $c(k')$ is decreased by $1$ and another row count is increased by $1$, so that $c(k)$ in $R''$ is still positive.
Since $|\TR-R''|=K$ the induction hypothesis assures that
the  \texttt{WeakFindRank} algorithm will return $\TR$. But then in carrying this out, we have to work on row $k$, sooner or later, to decrease the positive row count $c(k)$. But there is no way the arrow $A_i$ can stop being the right most starting at level $k$, since arrows to the right of $A_i$ start at a higher level than $A_i$  and  are only moving upwards.   Thus the fact that   the  \texttt{WeakFindRank} algorithm outputs  $\TR$ contradicts that fact that the application of step (2) to $R$ cannot be carried out.

Thus we will be able to lift $A_i$ one level up as needed and obtain the sequence $R'$ obtained by replacing $r_i$ by $r_i+1$ in $R$.
But now  we will have $|\TR-R'|$ =K$ $ with  $R'\prec \TR$
and the inductive hypothesis will assure us that the
\texttt{WeakFindRank} algorithm starting from $R$ will return $\TR$
as asserted.
\end{proof}

\addpic[.295\linewidth]{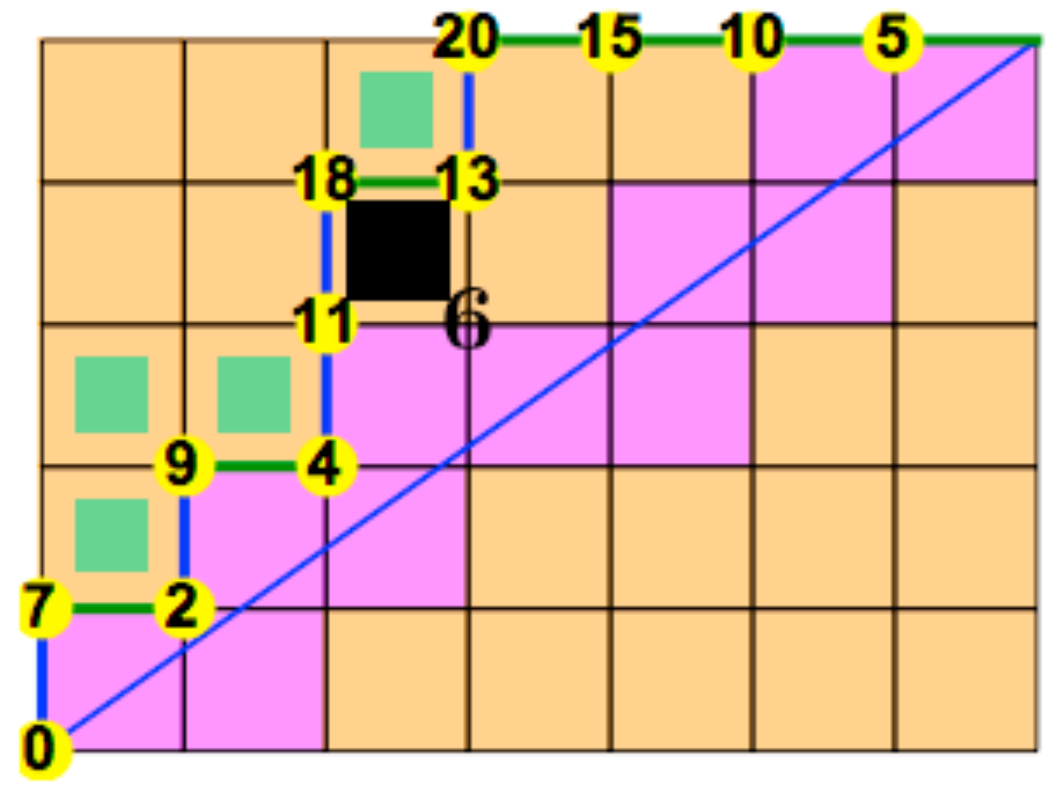}{A \\
$(7,5)$-Dyck path with area $4$. Removing the black cell changes the rank $18$ to $18-7-5=6$.
\label{fig:RemoveASquare} }

It is clear now that the complexity of the \texttt{WeakFindRank} algorithm is $O(|\TR|)$.
Recall that
reordering $\TR$ gives the rank set $\{r_1,r_2,\dots, r_{m+n}\}$ of $\overline D$. It is known and easy to show that
$$ \textrm{area} (\overline D)= \frac{1}{m+n}\left( \sum_{i=1}^{m+n} r_i -\binom{m+n}{2}  \right),\qquad\qquad\qquad\qquad\qquad $$
where $\textrm{area} (\overline D)$ is the number of lattice cells between $\overline D$ and the diagonal.
Indeed, from Figure \ref{fig:RemoveASquare},
it should be evident that reducing the area by $1$ corresponds to reducing the sum $r_1+\cdots +r_{m+n}$ by $m+n$.

It follows that
\begin{align}
  \label{e-TR-area}
 |\TR| = (m+n)\textrm{area} (\overline D)+\binom{m+n}{2} =O((m+n)\textrm{area} (\overline D)) .\qquad\qquad\qquad\qquad\qquad
\end{align}

Theorem \ref{t-tight} together with \eqref{e-TR-area} gives the following result.
\begin{cor}
Given any $(m,n)$-Dyck path $D$, its pre-image $\overline D$ can be produced in $O((m+n) \area( \overline D))$ running time.
\end{cor}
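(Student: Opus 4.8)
The plan is to simply assemble the pieces already in place. The claim is that for any $(m,n)$-Dyck path $D$, its pre-image $\overline D$ under the sweep map can be produced in $O((m+n)\area(\overline D))$ time. First I would run the \texttt{WeakFindRank} algorithm starting from the \emph{canonical} initial rank sequence $R^{(0)}$ for $\Sigma$ (the $(S,W)$-word of $D$): recall this is the sequence obtained by replacing the first block of $S$'s in $\Sigma$ by $0$'s and every other letter by $m$. By Theorem \ref{t-onto} the algorithm terminates and outputs a balanced path diagram $T(\Sigma,\oR)$ with $\oR$ strictly increasing; and by Theorem \ref{t-tight}, since $R^{(0)}$ is canonical, the output is precisely the canonical balanced sequence $\TR$, so in particular $\bar r_1 = 0$ here and $\oR = \TR$.

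Next I would count steps. Each iteration of Step 2 raises exactly one rank by one unit, so the number of iterations equals $|\TR - R^{(0)}| = |\TR| - |R^{(0)}|$. Since all entries of $R^{(0)}$ are $0$ or $m$, we have $|R^{(0)}| \le m(m+n)$, which is $O((m+n)\area(\overline D))$ once $\area(\overline D)\ge 1$ (the area-zero case — the staircase path — is trivial and handled separately). Combining with the identity \eqref{e-TR-area}, namely $|\TR| = (m+n)\area(\overline D) + \binom{m+n}{2} = O((m+n)\area(\overline D))$, the total number of iterations of Step 2 is $O((m+n)\area(\overline D))$. Finally, each iteration does a bounded amount of bookkeeping per row — locating the lowest positive row count and the rightmost arrow starting there, then updating $O(1)$ many row counts as the arrow shifts up by one — so each iteration costs $O(\text{poly}(m+n))$, and one must note that the number of rows $N = U + 2mn$ is itself $O((m+n)\area(\overline D))$ via the bound $U = \max(R^{(0)}) + m + 1 = O(m+n)$. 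Reading off $\overline D$ from the final balanced diagram (the "follow the next available arrow" procedure from Section 1) is then linear in the number of arrows. Hence the total running time is $O((m+n)\area(\overline D))$, up to the polynomial per-step overhead that the paper is absorbing into the $O$-estimate.

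The only genuinely substantive ingredient is Theorem \ref{t-tight}, which pins the output of \texttt{WeakFindRank} to $\TR$ regardless of which admissible starting sequence between $R^{(0)}$ and $\TR$ is used, and thereby forces $\bar r_1 = 0$; without it one could not control $|\oR| - |R^{(0)}|$ and the running time estimate would degrade by the unwanted additive term $(m+n)\bar r_1$. Since Theorem \ref{t-tight} and \eqref{e-TR-area} are both already established, I expect no real obstacle: the proof is essentially one line — "apply Theorem \ref{t-tight} with the canonical $R^{(0)}$, so the step count is $|\TR| - |R^{(0)}| = O((m+n)\area(\overline D))$ by \eqref{e-TR-area}" — with only the minor care of checking that the ambient lattice height $N$ and the per-iteration cost do not secretly exceed this bound.
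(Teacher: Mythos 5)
Your proposal is correct and follows essentially the same route as the paper: run \texttt{WeakFindRank} from the canonical $R^{(0)}$, invoke Theorem \ref{t-tight} to pin the output to $\TR$, count $|\TR|-|R^{(0)}|$ iterations of Step~2 (each with bounded updating cost), and conclude via \eqref{e-TR-area}. The extra care you take about the per-iteration overhead and the area-zero case is reasonable but does not change the argument.
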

\begin{proof}
Let $\Sigma$ be the $(S,W)$-word of $D$. We first construct the path diagram $T=(\Sigma, R^{(0)})$ with  $R^{(0)}$ being canonical for $\Sigma$ and compute
the row counts of the path diagram.
Next we use the \texttt{WeakFindRank} algorithm to update $T$ until we get the balance path diagram $(\Sigma, \TR)$ by Theorem \ref{t-tight}. Finally we use Theorem
\ref{t-onto} to find the pre-image $\overline D$.

Iteration only appears in the middle part, where the \texttt{WeakFindRank} algorithm  performs $|\TR|-  |R^{(0)}|$ times of Step 2. In each Step 2, we search for the lowest positive row count $c(j)$, then search for the rightmost arrow $r_i$ that is equal to $j$, and finally update $r_i$ by $r_i+1$ and the row counts at \emph{only} two rows (see Figure \ref{fig:ShiftArrow}). Therefore, the total running time is $O( |\TR|) $, and the corollary follows by \eqref{e-TR-area}.
\end{proof}

%

\def \HR {\widehat R}
\def \Hr {\widehat r}

\def\stco{\mathop{\texttt{sc}}}

\section{Validity of Algorithm \texttt{StrongFindRank} }
Let $R=(r_1\le r_2\le \cdots \le r_l)$ be a  sequence of nonnegative integers of length $l$. The \hbox{{\it strict cover $R'=\stco (R)=(r_1'<r_2'< \cdots < r_l')$ of $R$}} is recursively defined by
$r_1'=r_1$ and $r_i'=\min(r_{i},r_{i-1}'+1)$ for $i\ge 2$. It is the unique minimal strictly increasing sequence satisfying $R'\succeq R$. The following principle is straightforward.

\medskip
\centerline{\emph{ If $R\prec \oR$ with $R$ weakly increasing and $\oR$ strictly increasing, then $\stco (R) \preceq\oR
$}.}

A direct consequence is that $\HR^{(0)} = \stco( R^{(0)})  \preceq \TR$ if $R^{(0)}$ is canonical for $\Sigma$. This sequence is exactly the starting rank sequence
of our ``strong" algorithm (see Figure \ref{fig:InitialRank}). It will be good to  review our definitions before we proceed.

\noindent
Algorithm
\texttt{StrongFindRank}

\noindent
\textbf{Input:} A path diagram $T(\Sigma,\HR^{(0)})$ with $\Sigma$ the word of a Dyck path $D\in \cal D_{m,n}$, the nonnegative strictly increasing rank sequence $\HR^{(0)}$ as above.

\noindent
\textbf{Output:} The balanced path diagram $T(\Sigma, \HR)$.

It will be convenient to keep the common height equal to $N$ for all the successive  path diagrams constructed by the algorithm, where $N=U+2mn$, with $U=\max(R^{(0)})+m+1$.

\begin{enumerate} \setlength\itemsep{2mm}

\item[Step 1] {Starting with $T(\Sigma,\HR^{(0)})$ repeat the following step
until the resulting path diagram is balanced. }

\item[Step 2]
{In $T(\Sigma,\HR^{(s)})$,  with  $\HR^{(s)} =(\Hr_1^{(s)},\Hr_2^{(s)},\ldots ,\Hr_{m+n}^{(s)})$  find the lowest row $j$ with $c(j)>0$ and
find the unique  arrow that starts at level $j$. Suppose that arrow
starts at $(i,j)$. Define $R'$ to be the rank sequence obtained from
$\HR^{(s)}$ by the \hbox{replacement}
\hbox{$\Hr_i^{(s)}\longrightarrow\Hr_i^{(s)}+1$}, and set
$\HR^{(s+1)}=\stco(R')$.
Construct the path diagram  \hbox{$T(\Sigma,\HR^{(s+1)})$} and update the row counts.
If all the row counts of $T(\Sigma,\HR^{(s)})$ are $\le 0$ then stop the algorithm  and return $\HR^{(s)} $}, since all these row counts must  vanish.
\end{enumerate}

This given, the validity of the \texttt{StrongFindRank} algorithm is an immediate consequence of the following surprising result.

\begin{theo}\label{t-surprise}
Let $D\in \mathcal{D}_{m,n}$ with $(S,W)$-word $\Sigma$, and let the balanced path diagram
$(\Sigma,\TR)$ be canonical for $\Sigma$. Then
all the
successive rank sequences  $\HR^{(s)}$ produced by the \texttt{StrongFindRank} algorithm satisfy the inequality
\begin{align}
  \label{e-1}
\HR^{(s)}\preceq \TR
\end{align}
and since the successive rank sequences  satisfy the  inequalities
\begin{align}\label{e-2}
 \HR^{(0)}\prec \HR^{(1)}\prec  \HR^{(2)}\prec \HR^{(3)}\prec
\cdots \prec  \HR^{(s)},
\end{align}
there will necessarily come a step when $T(\Sigma, \HR^{(s)})=T(\Sigma,\TR)$. At that time the algorithm will stop and output $\TR$.
\end{theo}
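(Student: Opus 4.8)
The plan is to prove the two displayed inequalities \eqref{e-1} and \eqref{e-2} and then combine them with the finiteness of the lattice to force termination at $T(\Sigma,\TR)$. The inequality \eqref{e-2} is essentially free: each step (2) first increases one coordinate by one unit, passing from $\HR^{(s)}$ to $R'$ with $R'\succ\HR^{(s)}$, and then applies the strict-cover operator $\stco$; since $\stco$ is monotone and $\stco(R')\succeq R'$, while $\HR^{(s)}$ is already strictly increasing so that $\stco$ cannot decrease it, we get $\HR^{(s+1)}=\stco(R')\succeq R'\succ\HR^{(s)}$. So the sequence of rank sequences is strictly increasing in the $\prec$ order, and in particular no rank sequence is ever repeated.

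The real content is the invariance \eqref{e-1}, which I would prove by induction on $s$, the base case $\HR^{(0)}\preceq\TR$ being exactly the consequence of the straightforward principle recalled just before the algorithm (applied to the canonical $R^{(0)}$, using $R^{(0)}\prec\TR$ and $\TR$ strictly increasing, so $\HR^{(0)}=\stco(R^{(0)})\preceq\TR$). For the inductive step, assume $\HR^{(s)}\preceq\TR$ and let row $j$ be the lowest with $c(j)>0$ in $T(\Sigma,\HR^{(s)})$, with the unique arrow $A_i$ starting there, so $\Hr_i^{(s)}=j$. The first thing to check is that $\Hr_i^{(s)}<\tr_i$, i.e. that bumping coordinate $i$ does not overshoot $\TR$: this is exactly where Theorem \ref{t-tight} enters. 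Indeed $T(\Sigma,\TR)$ is balanced so in it $c(j)=0$, whereas in $T(\Sigma,\HR^{(s)})$ we have $c(j)>0$; since $\HR^{(s)}\preceq\TR$, if we had $\Hr_i^{(s)}=\tr_i$ then $A_i$ would also be the rightmost (indeed the unique, among those at the lowest positive row) arrow at level $j$ in a configuration sitting below $\TR$, and running \texttt{WeakFindRank} from $\HR^{(s)}$ would, by Theorem \ref{t-tight}, have to return $\TR$ — but as in the proof of Theorem \ref{t-tight} the arrow $A_i$ can never stop being the rightmost one at level $j$ (arrows to its right only move up and start strictly higher), so row $j$ could never be brought down to count $0$, contradiction. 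Hence $\Hr_i^{(s)}<\tr_i$, so $R'=\HR^{(s)}+\mathbf e_i\preceq\TR$. Now apply $\stco$: since $R'\preceq\TR$ with $\TR$ strictly increasing and $R'$ weakly increasing, the same straightforward principle gives $\HR^{(s+1)}=\stco(R')\preceq\TR$, completing the induction.

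With \eqref{e-1} and \eqref{e-2} in hand, the conclusion is immediate: all $\HR^{(s)}$ lie in the finite set of integer sequences bounded coordinatewise between $\HR^{(0)}$ and $\TR$, and they are pairwise distinct by \eqref{e-2}, so the algorithm must halt after finitely many steps; when it halts, $T(\Sigma,\HR^{(s)})$ has no positive row counts, hence by Lemma \ref{l-basic-property}$(iii)$ it is balanced, i.e.\ $\HR^{(s)}=\TR$ (the canonical balanced rank sequence being the unique balanced one $\succeq$ nothing in particular — here uniqueness follows since $\TR$ is the output and any balanced sequence reachable must, by the argument above, equal $\TR$). The algorithm then returns $\TR$ as claimed.

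I expect the main obstacle to be the overshoot step, i.e.\ verifying $\Hr_i^{(s)}<\tr_i$ cleanly; the delicate point is that \texttt{StrongFindRank} and \texttt{WeakFindRank} do not take the same intermediate path diagrams, so one cannot directly say "the strong algorithm stays below the weak output" without invoking Theorem \ref{t-tight} in the slightly indirect way above. One should be careful that the hypotheses of Theorem \ref{t-tight} are met, namely that $\HR^{(s)}$ is an increasing sequence satisfying $R^{(0)}\preceq\HR^{(s)}\preceq\TR$ — increasingness is built into $\stco$, and $R^{(0)}\preceq\HR^{(0)}\preceq\HR^{(s)}$ by \eqref{e-2}. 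Everything else is bookkeeping with the monotonicity of $\stco$ and the $0$-row-count characterization from Theorem \ref{t-characterization}.
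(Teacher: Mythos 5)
Your proof is correct and follows essentially the same route as the paper's: induction on $s$, with the strict-cover principle handling the base case and each application of the cover operator, and with Theorem~\ref{t-tight} supplying the key fact that the \texttt{WeakFindRank}-step successor $R'$ of $\widehat R^{(s)}$ satisfies $R'\preceq\widetilde R$, after which the strict increase \eqref{e-2} and finiteness force termination at $\widetilde R$. The only difference is cosmetic: where the paper simply cites Theorem~\ref{t-tight} to get $R'\preceq\widetilde R$, you unpack that citation into the same ``$A_i$ never stops being the rightmost arrow at level $j$'' contradiction that already appears inside the proof of Theorem~\ref{t-tight}.
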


\begin{proof}
The inequality \eqref{e-2} clearly holds since we always shift arrows upwards.

We prove the inequality \eqref{e-1} by induction on $s$. The basic fact that will play a crucial role is that the output $\TR$
is strictly increasing. See Theorem \ref{t-characterization}.

The case $s=0$ of \eqref{e-1} is obviously true since $\HR^{(0)}$ is the strict cover of
$R^{(0)} \preceq \TR$. Assume $\HR^{(s)}\preceq \TR$ and we need to show \eqref{e-1} holds true for $s+1$.
Now $\HR^{(s+1)}$ is the strict cover of $R'$, where  $R'$ is the auxiliary rank sequence  used by Step 2 of the \texttt{StrongFindRank} algorithm.
Since $R'$ is precisely the successor of $\HR^{(s)}$ by Step 2 of the  \texttt{WeakFindRank} algorithm, it will necessarily satisfy the inequality $R'\prec \TR$ by Theorem \ref{t-tight}. Our  principle then guarantees that we will also have
$$
\HR^{(s+1)}\prec \TR
$$
unless $\HR^{(s)}= \TR$ and the \texttt{StrongFindRank} algorithm terminates.
\end{proof}

\begin{rem}
This proof makes it evident that, to  construct  the pre-image of an $(m,n)$ Dyck path, the \texttt{StrongFindRank} algorithm will be more efficient than the \texttt{WeakFindRank} algorithm. This  is partly due to the fact that  the distances  $|\HR^{(s+1)}-\HR^{(s)}|$ do turn out bigger than one unit
most of the time, as we can see in the following display.

\end{rem}

\begin{figure}[!ht]
\centering{
\mbox{\includegraphics[height=4in]{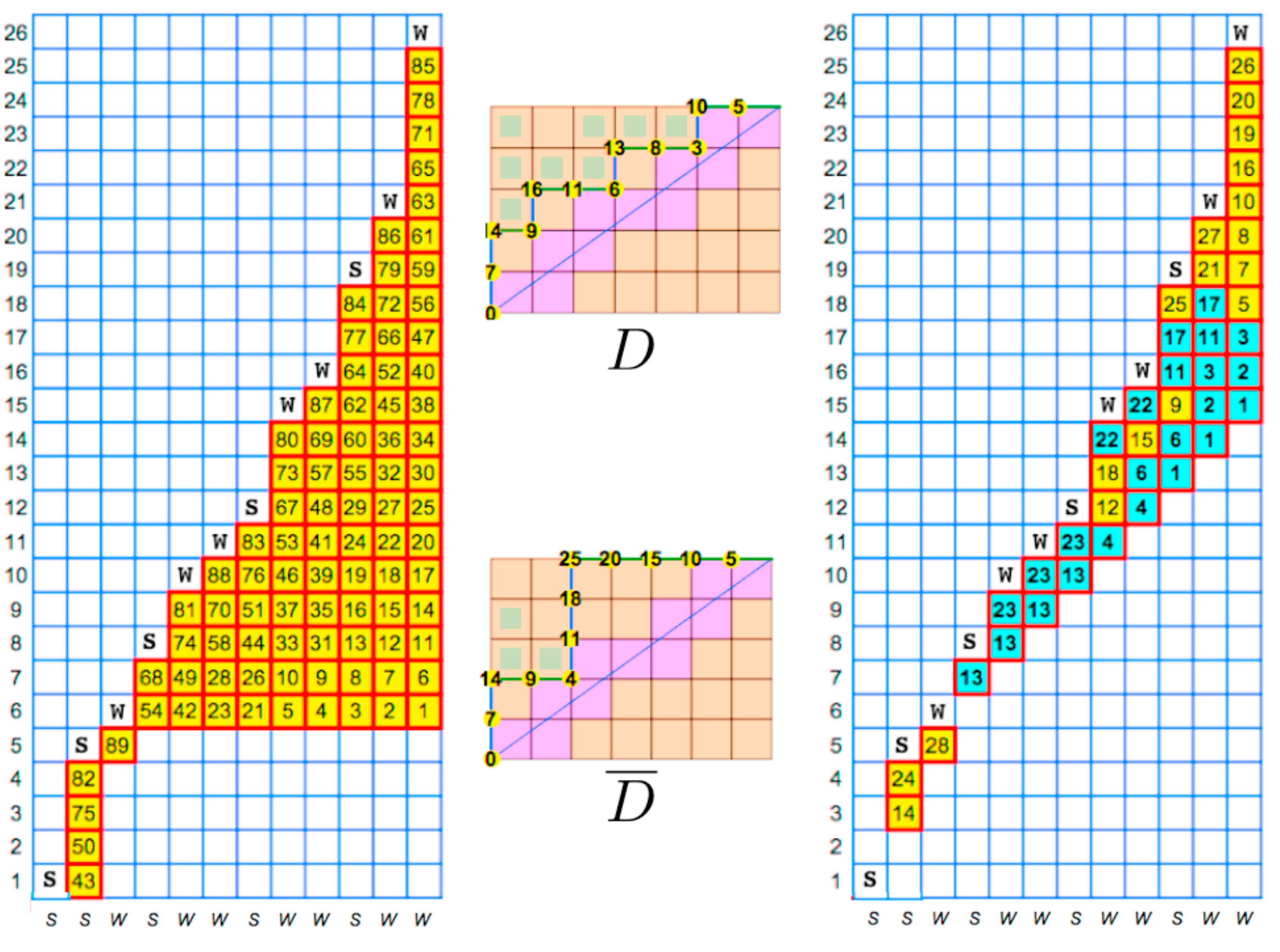}}
}
\caption{Comparison of the WeakFindRank algorithm and the StrongFindRank algorithm by an example.\label{fig:MAGO}}
\end{figure}

In the middle of Figure \ref{fig:MAGO} we have a Dyck path $D$, and below it, its pre-image $\overline D$. To recover $\overline D$ from $D$ we applied to $D$ the
\texttt{WeakFindRank} algorithm (on the left) and  the \texttt{StrongFindRank} algorithm  (on the right). The display shows that the ``weak" algorithm required about $3$ times more steps than the  ``strong"   algorithm.
The numbers in the Cyan squares reveal that, in several steps,
two or more arrows were lifted at the same time. On one step as many as $4$ arrows where lifted (step $13$).  The other step saving feature
of the ``strong" algorithm  is due to   starting from
the strict cover of the canonical starting sequence. This is evidenced
by the difference between the number of white cells below the colored ones on  the left and on the right diagrams.

\section{Discussion and Future Plans}
This work is done after the authors read \cite{Nathan} version 1, especially after the first named author talked with Nathan Williams. The concept ``balanced path diagram" is a translation of ``equitable partition" in \cite{Nathan}. The intermediate object ``increasing balanced path diagram" is what we missed in our early attempts:
The obvious $0$-row-count property of Dyck paths gives the necessary part of Theorem \ref{t-characterization}, but we never considered the $0$-row-count property to be sufficient until we read the paper \cite{Nathan}.

Once Theorem \ref{t-characterization} is established, inverting the sweep map is reduced to searching for the corresponding increasing balanced path diagram. Our algorithm is similar to the Thomas-Williams algorithm in the sense that
both algorithms proceed by picking an initial candidate and then repeat an identical updating process until terminates. In the rational Dyck path model, our updating process is natural and has more freedom than the Thomas-Williams algorithm. For instance, we can start with any weakly increasing rank sequence.

In an upcoming paper, we will extend the arguments in this paper to a more general class of sweep maps. Such sweep maps have been defined in \cite{sweepmap}. Though
the invertibility of these sweep maps can be deduced from Nathan's modular sweep map model \cite{Nathan},  they deserve direct proofs.

Even the rational sweep map needs further studied. The $(m,n)$-rational sweep map on $\mathcal D_{m,n}$ is known to take the \texttt{dinv} statistic to the \texttt{area} statistic.
This result is proved combinatorially by Gorsky and Mazin in \cite{Gorsky-Mazin}, but the proof is indirect. Our view of Dyck paths leads to  visual description of the \texttt{dinv} statistics and a simple proof of the \texttt{dinv} and \texttt{area} result. See \cite{dinv-area} for detailed information and references.

{\small \textbf{Acknowledgements:}

The first named author is grateful to Nathan Williams  for the time and effort that he spent to communicate his pioneering proof of invertibility of the general sweep map.

The first named author was supported by NFS grant DMS13--62160. The second named author was supported by National Natural Science Foundation of China (11171231).

\end{document}